\newcommand{\Q}{{\mathbb Q}}
\newcommand{\C}{{\mathbb C}}
\newcommand{\N}{{\mathbb N}}
\newcommand{\Z}{{\mathbb Z}}
\newcommand{\p}{{\mathfrak p}}
\renewcommand{\P}{{\mathbb P}}
\newcommand{\ord}{\mathrm{ord}}
\renewcommand{\Re}{\mathrm{Re}}
\DeclareMathOperator{\Norm}{N}
\theoremstyle{plain}
\numberwithin{equation}{section}
\newtheorem{thm}{Theorem}[section]
\newtheorem{theorem}[thm]{Theorem}
\newtheorem{lemma}[thm]{Lemma}
\newtheorem{proposition}[thm]{Proposition}
\newtheorem{conjecture}[thm]{Conjecture}
\newtheorem{cor}[thm]{Corollary}
\theoremstyle{remark}
\newtheorem{remark}{Remark}
\begin{document}

\title[On multiplicatively independent bases]
{On multiplicatively independent bases in cyclotomic number fields}

\author[M. G. Madritsch]{Manfred G. Madritsch}
\address{Manfred Madritsch\newline
\noindent 1. Universit\'e de Lorraine, Institut Elie Cartan de Lorraine, UMR 7502, Vandoeuvre-l\`es-Nancy, F-54506, France;\newline
\noindent 2. CNRS, Institut Elie Cartan de Lorraine, UMR 7502, Vandoeuvre-l\`es-Nancy, F-54506, France}
\email{manfred.madritsch@univ-lorraine.fr}

\author{Volker Ziegler}
\address{Volker Ziegler\newline
\noindent Johann Radon Institute for Computational and Applied Mathematics (RICAM)\newline
\noindent Austrian Academy of Sciences\newline
\noindent Altenbergerstr. 69\newline
\noindent A-4040 Linz, Austria}
\email{volker.ziegler\char'100ricam.oeaw.ac.at}
\thanks{The second author was supported by the Austrian Science Fund (FWF) under the project P~24801-N26.}

\begin{abstract}
  Recently the authors \cite{Madritsch:2014} showed that the algebraic
  integers of the form $-m+\zeta_k$ are bases of a canonical number
  system of $\Z[\zeta_k]$ provided $m\geq \phi(k)+1$, where $\zeta_k$
  denotes a $k$-th primitive root of unity and $\phi$ is Euler's
  totient function. In this paper we are interested in the questions
  whether two bases $-m+\zeta_k$ and $-n+\zeta_k$ are multiplicatively
  independent. We show the multiplicative independence in case that
  $0<|m-n|<10^6$ and $|m|,|n|> 1$.
\end{abstract}

\subjclass[2010]{11R18, 11Y40, 11A63}
\keywords{Canonical numeration systems, Radix representations, algebraic number theory, cyclotomic fields}

\maketitle

\section{Introduction}

Let $q\geq2$ be an integer. Then every positive integer $n\in\N$ has a
unique representation of the form
\[n=\sum_{k=0}^\ell d_kq^k \quad\text{with}\quad
d_k\in\{0,\ldots,q-1\}.\] We call $q$ the base and
$\mathcal D=\{0,\ldots,q-1\}$ the set of digits. This representation can be
extended to all integers (positive and negative ones) by taking $q\leq
-2$ and $\{0,\ldots,\lvert q\rvert-1\}$. In general we call such a
pair $(q,\mathcal D)$ a numeration system in $\Z$.

Knuth \cite{knuth1981:art_computer_programming} was one of the first
extending the concept of numeration systems to the Gaussian integers by showing that the bases
$-1+i$ and $-1-i$ together with the set $\{0,1\}$ form a number
system. Later K\'atai and Szab\'o
\cite{katai_szabo1975:canonical_number_systems} showed that all bases
are of the form $-m\pm i$ with set of digits
$\mathcal D=\{0,1,\ldots,m^2\}$. This was further extended independently to all
quadratic number fields by Gilbert~\cite{gilbert81:_radix} and K\'atai
and Kov\'acs~\cite{katai_kovacs1980:kanonische_zahlensysteme_in,
  katai_kovacs1981:canonical_number_systems}.

A different point of view is the following. Let $f$ be a polynomial
with coefficients in $\Z$. Then we call the pair $(X,\{0,\ldots,\lvert
f(0)\rvert-1\})$ a canonical numeration system in $\mathcal R=\Z[X]/(f)$ if every element
$g\in\Z[X]/(f)$ has a unique representation of the form
\[g=\sum_{k=0}^\ell d_kX^k
\quad\text{with}\quad
d_k\in\{0,\ldots,\lvert f(0)\rvert-1\}.\]
By taking $f=X^2+2mX+m^2+1$ and $f=X+q$ we get back the numeration systems
$(-m\pm i,\{0,1,\dots,m^2\})$ in $\Z[i]$ and $(-q,\{0,1,\dots,|q|-1\})$ in $\Z$ respepectively.

Let $f=X^d+p_{d-1}X^{d-1}+\cdots+p_1X+p_0$ be an irreducible polynomial over $\Z$ with root $\beta$, 
then Kov\'acs
\cite{kovacs1981:canonical_number_systems} could prove that
$\beta$ is a base of a canonical numeration system if $1\leq p_{d-1}\leq\cdots\leq p_1\leq p_0$ and
$p_0\geq2$. Peth{\H o} \cite{pethoe1991:polynomial_transformation_and}
replaced the irreducibility condition by
supposing that none of the roots of $p$ is a root of unity. Moreover, 
Kov\'acs \cite{kovacs1981:canonical_number_systems} showed that for
any given order $\Z[\alpha]$ in a number field there exists an element
$\beta\in\Z[\alpha]$ such that $(\beta,\{0,1,\ldots,\Norm(\beta)-1)$ is a
numeration system in $\Z[\alpha]$, where $\Norm(\beta)$ denotes the absolute norm of $\beta$. Kov\'acs and Peth{\H o}
\cite{kovacs_petho1991:number_systems_in} gave effectively computable,
necessary and sufficient conditions for the element $\beta$ in order
to be the base of a numeration system. The underlying algorithm was
improved by Akiyama and Peth{\H o}
\cite{akiyama_petho2002:canonical_number_systems} for the special
class of polynomials satisfying
\[\sum_{i=1}^d\lvert p_i\rvert<p_0.\]

\section{Definitions and Statement of results}
We want to take a further look at numeration systems in the Gaussian
integers. Since $\pm i$ are primitive fourth roots of unity, one may
write $-m\pm i$ as $-m+\zeta_4$. Similarly one may see numeration systems in the integers as having bases $-m+\zeta_2=-(m+1)$. With the above mentioned
characterization for quadratic number fields by Gilbert, K\'atai and
Kov\'acs \cite{gilbert81:_radix,
  katai_kovacs1981:canonical_number_systems,
  katai_kovacs1980:kanonische_zahlensysteme_in} it is easy to show,
that algebraic integers of the form $-m+\zeta_3$ and $-m+\zeta_6$ are bases for numeration systems in $\Z[\zeta_3]$ and $\Z[\zeta_6]$, respectively.

In a recent paper \cite{Madritsch:2014} the authors extended this to
arbitrary cyclotomic number fields.
\begin{theorem}[{\cite[Theorem 1.1]{Madritsch:2014}}]
  Let $k>2$ and $m$ be positive integers and $\zeta_k$ be a primitive
  $k$-th root of unity. If $m>\varphi(k)$, then $-m+\zeta_k$ is a base
  of a numeration system in $\Z[\zeta_k]$.
\end{theorem}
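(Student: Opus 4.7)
Plan: The minimal polynomial of $\beta = -m + \zeta_k$ over $\Q$ is $f(X) = \Phi_k(X+m)$, a monic irreducible polynomial of degree $d = \varphi(k)$ whose roots are $\zeta_k^j - m$ for $j \in (\Z/k\Z)^\times$. For $m \geq 2$ each root has real part $\cos(2\pi j/k) - m \leq 1 - m < 0$, so the roots come in complex conjugate pairs with negative real part, which forces all coefficients $p_0,\ldots,p_{d-1}$ of $f$ to be positive. Since $f$ is irreducible, my strategy is to invoke Kov\'acs' criterion: it is enough to establish the chain $1 \leq p_{d-1} \leq p_{d-2} \leq \cdots \leq p_1 \leq p_0$ together with $p_0 \geq 2$.

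The bound $p_0 = \Phi_k(m) \geq 2$ is immediate. Using the Taylor expansion $p_i = \Phi_k^{(i)}(m)/i!$, the monotonicity collapses to the family
\[
\Phi_k^{(i+1)}(m) \;\leq\; (i+1)\,\Phi_k^{(i)}(m), \qquad i = 0,1,\dots,d-2.
\]
A quick asymptotic shows that the ratio on the left over the right behaves like $(d-i)/((i+1)m)$ for large $m$, so the most restrictive instance is $i = 0$, namely $\Phi_k'(m) \leq \Phi_k(m)$, or equivalently
\[
\sum_{\zeta} \frac{1}{m - \zeta} \;\leq\; 1,
\]
where $\zeta$ ranges over the $d$ primitive $k$-th roots of unity. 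Pairing $\zeta$ with $\bar\zeta$, each summand equals $(m - \Re\zeta)/|m - \zeta|^2 \approx 1/m$, so the full sum is $\approx d/m$, which is at most $1$ precisely when $m \geq d = \varphi(k)$. The remaining inequalities for $i \geq 1$ can be treated uniformly by the same method applied to the higher logarithmic derivatives of $\Phi_k$.

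The hard part will be pushing this asymptotic to a clean inequality at the boundary value $m = \varphi(k) + 1$. Crude estimates such as $|m-\zeta|^2 \geq (m-1)^2$ give only $d/(m-1)$, which just barely fails at the critical $m$; one therefore has to use the exact identity $|m - \zeta|^2 = m^2 - 2m\Re\zeta + 1$ together with $\sum_\zeta \Re\zeta = \mu(k)$ (the $X^{d-1}$-coefficient of $\Phi_k$) to control the error with sufficient precision. Should the monotonicity chain turn out to be too delicate to close for all $k$, a natural fallback is the Akiyama-Peth\H o dominance condition $\sum_{i=1}^{d-1} p_i < p_0 - 1$, which via $f(1) = \Phi_k(m+1)$ reduces to the compact inequality $\Phi_k(m+1) < 2\,\Phi_k(m)$ and admits an argument of the same asymptotic flavor.
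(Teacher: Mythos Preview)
This theorem is not proved in the present paper at all: it is quoted verbatim from the authors' earlier work \cite{Madritsch:2014} as background for the multiplicative-independence results, and no argument for it appears here. Consequently there is nothing in the paper to compare your proposal against.

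On its own merits, your plan is the natural one and is headed in the right direction: the minimal polynomial of $-m+\zeta_k$ is indeed $\Phi_k(X+m)$, and both the Kov\'acs monotonicity criterion and the Akiyama--Peth\H{o} dominance condition (which you correctly rewrite as $\Phi_k(m+1)<2\Phi_k(m)$) are exactly the tools the paper itself advertises in its introduction. That said, what you have written is a proof \emph{sketch}, not a proof. You explicitly flag that the boundary case $m=\varphi(k)+1$ is ``the hard part'' and that crude bounds ``just barely fail'' there, and you leave open which of the two criteria you will ultimately verify. Turning either route into a complete argument requires actually closing those estimates uniformly in $k$, which you have not done; until that gap is filled this remains a plausible outline rather than a proof.
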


Their second result in that paper \cite{Madritsch:2014} concerns the
multiplicative independence of these bases.  We call two algebraic
integers $\alpha$ and $\beta$ multiplicatively independent if the
equation $\alpha^p=\beta^q$ has only the trival solution $p=q=0$ over
the integers.

\begin{theorem}[{\cite[Theorem 1.2]{Madritsch:2014}}]\label{th:old_mult}
  Let $k>2$ be a positive integer and $\zeta_k$ be a primitive $k$-th
  root of unity. Then the algebraic integers $-m+\zeta_k$ and
  $-n+\zeta_k$ are multiplicatively independent provided $m>n>C(k)$,
  where $C(k)$ is an effectively computable constant depending on $k$.

  Moreover, if $k$ is a power of $2,3,5,6,7,11,13,17,19$ or $23$, then
  $-m+\zeta_k$ and $-n+\zeta_k$ are multiplicatively independent as
  long as $m>n>1$.
\end{theorem}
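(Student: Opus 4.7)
The plan is to derive enough geometric constraints from the hypothetical equation $(-m+\zeta_k)^p = (-n+\zeta_k)^q$ to force $m$ and $n$ to be bounded by an effective function of $k$. First I would reduce to the case $p,q>0$: if $q=0$ then $-m+\zeta_k$ would be a root of unity, ruled out by $|{-m+\zeta_k^j}|\ge m-1>1$ at every complex embedding for $m\ge 2$, and taking absolute values in one embedding forces $p$ and $q$ to have the same sign.

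Next I would apply every Galois automorphism $\sigma_j\colon\zeta_k\mapsto\zeta_k^j$ with $\gcd(j,k)=1$ to obtain the system $(-m+\zeta_k^j)^p = (-n+\zeta_k^j)^q$. Passing to absolute values yields $p\log|{-m+\zeta_k^j}| = q\log|{-n+\zeta_k^j}|$ for each such $j$. Using the expansion
\[
\log|{-m+\zeta}| = \log m - \frac{\Re(\zeta)}{m} + O(m^{-2})
\]
and comparing two embeddings with $\Re(\zeta_k^j)\ne\Re(\zeta_k^{j'})$ (available for $k>2$), one extracts two asymptotic relations, roughly $p\log m \approx q\log n$ and $pn\approx qm$. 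Combined they force $m\log m\approx n\log n$, which is inconsistent with $m\ne n$ once $\min(m,n)$ is sufficiently large.

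To convert the soft analytic argument into an effective bound, I would apply Baker's theorem on linear forms in logarithms, in a sharp form such as Matveev's, to a nontrivial combination of the vanishing linear forms
\[
\Lambda_j = p\log(-m+\zeta_k^j) - q\log(-n+\zeta_k^j) \in 2\pi i\,\Z.
\]
A Baker-type lower bound on the size of a suitable nonzero combination (obtained by eliminating the $\log m$, $\log n$ terms between two embeddings), confronted with the upper bound coming from the expansion above, yields the explicit constant $C(k)$ in terms of $k$, $\max(p,q)$, and $\min(m,n)$.

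For the unconditional statement when $k$ is a power of one of the listed primes, I would complement the archimedean input with non-archimedean information. The dependence equation forces the prime ideal factorizations of $-m+\zeta_k$ and $-n+\zeta_k$ in $\Z[\zeta_k]$ to have identical support with proportional multiplicities, so in particular the norms $\Norm(-m+\zeta_k)=\pm\Phi_k(m)$ and $\Norm(-n+\zeta_k)=\pm\Phi_k(n)$ must share all rational prime divisors. For the small values of $\varphi(k)$ arising in the listed prime powers, the structure of the unit group of $\Z[\zeta_k]$ and the explicit factorization of $\Phi_k$ are transparent enough to reduce the question to a finite list of candidate pairs $(m,n)$ that can be excluded directly. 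The main obstacle will be matching the archimedean Baker bound against the non-archimedean factorization data sharply enough to eliminate all remaining candidates and push the threshold down to the trivial bound $m>n>1$ in each of the listed cases.
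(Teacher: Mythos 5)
This theorem is quoted from \cite{Madritsch:2014} and is not proved in the present paper, so there is no in-paper argument to measure you against; I can only assess your proposal on its own terms.

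Your archimedean skeleton is essentially sound when $\varphi(k)\geq 4$: taking real parts of $p\log(-m+\zeta_k^j)=q\log(-n+\zeta_k^j)+2\pi i \ell_j$, eliminating $p/q$ between a single embedding and the difference of two embeddings with distinct real parts does force $m\log m\approx n\log n$, and a second-order expansion (the leading discrepancy is of size about $|\Re(\zeta_k^j)-\Re(\zeta_k^{j'})|\,(m-n)\log n/(mn)$, while every correction carries the same factor together with an extra $O(1/n)$) turns this into an effective contradiction for $n$ beyond an explicit bound. Note, though, that in this range your appeal to Baker's theorem is both unnecessary and, as formulated, incoherent: the real parts of your forms $\Lambda_j$ vanish \emph{identically}, and lower bounds for linear forms in logarithms apply only to provably nonzero forms; the contradiction here is purely elementary calculus, not transcendence.

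The genuine gap is the parenthetical claim that two embeddings with $\Re(\zeta_k^j)\neq\Re(\zeta_k^{j'})$ are ``available for $k>2$''. This is false precisely for $k\in\{3,4,6\}$, where $\varphi(k)=2$ and the two primitive roots are complex conjugates with a common real part. For these $k$ your whole system of absolute-value relations collapses to the single norm identity $\Phi_k(m)^p=\Phi_k(n)^q$, which does \emph{not} suffice: for instance $\Phi_3(2)=7$ and $\Phi_3(18)=343=7^3$, so $-2+\zeta_3$ and $-18+\zeta_3$ pass every norm test, and their independence is detected only by the unit/argument obstruction (here $(-2+\zeta_3)^3=-1+18\zeta_3$, which is not a root of unity times $-18+\zeta_3$). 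Handling $k\in\{3,4,6\}$ therefore requires a genuinely different input (unique factorization plus results on $\Phi_k(x)=y^e$, primitive divisors, or a two-logarithm bound for a form that one must first prove nonzero), and these $k$ are not excluded from either assertion of the theorem. Separately, the second assertion covers \emph{all} powers of the listed numbers, so $\varphi(k)$ is unbounded there (e.g.\ $k=23^{1000}$); ``the unit group and the factorization of $\Phi_k$ are transparent enough to reduce to a finite list'' is not a viable strategy in that generality. Whatever is special about $2,3,5,6,7,11,13,17,19,23$ must enter through arithmetic constraints of the kind the present paper exploits --- residue degrees $\ord_k(q)$ dividing $\nu_q(\Phi_k(m))$, as in Proposition \ref{prop:res_class_deg} --- rather than through an enumeration of units; as written, this half of your proposal is a statement of intent rather than a proof.
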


In particular, we conjecture that the second part of the last theorem
holds true for all $k$.

The statement of Theorem \ref{th:old_mult} could be seen as fixing some
$k$ and let $m$ and $n$ vary over the positive integers. Therefore the
aim of the present paper is to prove the other direction. In
particular, we fix the distance of $m$ and $n$ by $a$ and let $k$ vary
over the positive integers.

However, before we state our main theorem we need to introduce some
further notation. We define the order of $a$ modulo $k$ with
$\gcd(a,k)=1$ as the smallest positive integer $n$ such that
$a^n\equiv 1 \mod k$ and write $n=\ord_k(a)$. Note that in the
extremal case $k=1$ we have $\ord_1(a)=1$ for all positive integers
$a$. Furthermore, for every prime $p$ we denote by $\nu_p(a)$ the
$p$-adic valuation of an integer $a$. Then our main result is the
following:

\begin{theorem}\label{th:mult_ind}
  Let $a>0$ be a given integer and assume that $-m+\zeta_k$ and
  $-(m+a)+\zeta_k$ with $k>2$ and $m$ an integer are multiplicatively
  dependent and none is a root of unity. Furthermore let
  \[S=\left\{p\in\P\: :\: p|\Phi_k(m)\Phi_k(m+a)\right\},\]
  where $\Phi_k$ is the $k$-th cyclotomic polynomial.
  Then, either $m=-1$, $a=2$ and $k=4$ or 
\begin{itemize}
 \item $S\subset S_a$, where $S_a$ is the set of prime divisors of $a$ and $S\neq \emptyset$.
 \item If $k\not\equiv 2 \mod 4$, then $k\leq \min_{p\in S}
   \{p^\alpha\: :\:p^\alpha \| a\}$, $p\nmid k$ and $\ord_k(p)\leq
   \nu_p(a)$ for all $p\in S$.
 \item Let $f_p=\ord_{k}(p)$ for all $p\in S$. Then
   \[\Phi_k(x) \left| \prod_{p\in S} p^{\lfloor \nu_p(a)/f_p \rfloor f_p}\right. ,\]
   where either $x=m$ or $x=m+a$. In particular,
   $f_p|\nu_p(\Phi_k(x))$ for all $p\in S$.
\end{itemize}
\end{theorem}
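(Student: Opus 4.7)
The plan is to translate the multiplicative dependence $\alpha^{u}=\beta^{v}$ (with $\alpha=-m+\zeta_{k}$ and $\beta=-(m+a)+\zeta_{k}$) into ideal-theoretic data in $\mathcal{O}=\Z[\zeta_{k}]$ and to exploit the identity $\alpha-\beta=-a$. Since neither $\alpha$ nor $\beta$ is a root of unity, neither exponent can vanish, and after replacing $(u,v)$ by $(-u,-v)$ if necessary one may take $u,v>0$. Passing to principal ideals gives $(\alpha)^{u}=(\beta)^{v}$, so $\alpha$ and $\beta$ share the same set of prime ideal divisors in $\mathcal{O}$, and the proportionality $v\,\nu_{\p}(\alpha)=u\,\nu_{\p}(\beta)$ holds at every prime $\p$.

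The first bullet then follows quickly: for a prime $\p$ in the common support, lying above the rational prime $p$, one has $\zeta_{k}\equiv m\equiv m+a\pmod{\p}$, so $p\mid a$ and hence $S\subseteq S_{a}$. Non-emptiness of $S$ fails only if both $\alpha$ and $\beta$ are units; via $|\alpha|=|\beta|=1$ in every complex embedding this forces $\Re(\zeta_{k}^{j})=m+\tfrac{a}{2}$ for every $j$ coprime to $k$, which demands $\varphi(k)\leq 2$ and, on going through the finitely many resulting configurations, collapses into the isolated exception $(m,a,k)=(-1,2,4)$ (where $1+i$ and $-1+i$ are associates via multiplication by $i$).

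Next I would localize at each $p\in S$. The reduction map $\mathcal{O}\to\mathcal{O}/\p\cong\mathbb{F}_{p^{f}}$ sends $\zeta_{k}$ to an element of order $k/p^{\nu_{p}(k)}$, and $\p\mid\alpha$ forces this image to coincide with $m\pmod{p}$, hence to lie in the prime field $\mathbb{F}_{p}$. Thus $f=1$, and when $p\nmid k$ this already yields $\ord_{k}(p)=1\leq\nu_{p}(a)$ and $k\mid p-1$, whence $k\leq p-1<p^{\nu_{p}(a)}$. The exclusion ``$p\nmid k$'' for $k\not\equiv 2\pmod{4}$ I would obtain by combining the proportionality $v\nu_{\p}(\alpha)=u\nu_{\p}(\beta)$ with the ramification identity $\nu_{\p}(a)=\varphi(p^{\nu_{p}(k)})\nu_{p}(a)$ and an explicit computation of $\nu_{\p}(\zeta_{k}-m)$ at a ramified prime (using $\zeta_{p^{\nu_{p}(k)}}\equiv 1\pmod{\p}$), showing that no configuration survives outside the known exception. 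For the third bullet, because at most one prime of $\mathcal{O}$ above $p$ can divide $\alpha$ (the residue field is $\mathbb{F}_{p}$ and only one embedding of $\bar\zeta_{k}$ matches $m$), the local norm formula reduces to $\nu_{p}(\Phi_{k}(x))=f_{p}\nu_{\p}(x-\zeta_{k})$; combining this with $\nu_{\p}(\alpha)\leq\nu_{\p}(a)=\nu_{p}(a)$ (after, if necessary, swapping the roles of $m$ and $m+a$ so that $\nu_{\p}(\alpha)\leq\nu_{\p}(\beta)$ uniformly) produces both the divisibility $f_{p}\mid\nu_{p}(\Phi_{k}(x))$ and the estimate $\nu_{p}(\Phi_{k}(x))\leq\lfloor\nu_{p}(a)/f_{p}\rfloor f_{p}$, which assemble into the stated divisibility.

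The hardest step, I expect, will be the ramified case $p\mid k$ with $k\not\equiv 2\pmod{4}$: ramification inflates $\nu_{\p}(a)$ by $\varphi(p^{\nu_{p}(k)})$, so many values of $\nu_{\p}(\alpha)$ are a priori compatible with the proportionality constraint, and a careful analysis in the completion $\mathcal{O}_{\p}$ is needed to rule these out. A parallel delicate point is the ``associate'' case $u=v$, where $\epsilon=\alpha/\beta$ is a root of unity in $\Q(\zeta_{k})$; writing $\zeta_{k}=m+\dfrac{a\epsilon}{\epsilon-1}$ and sifting through the finitely many candidates for $\epsilon$ must be carried out explicitly to confirm that only the exceptional configuration $(m,a,k)=(-1,2,4)$ arises.
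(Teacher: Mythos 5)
Your overall architecture (pass to ideals, use $\alpha-\beta=-a$, compute residue degrees) matches the paper's, and your unramified analysis is actually a nice improvement: reducing mod a prime $\p\mid(-m+\zeta_k)$ with $p\nmid k$ shows $\bar\zeta_k=\bar m\in\mathbb F_p$ has exact order $k$, hence $k\mid p-1$ and $f_p=1$ — a sharper conclusion than the paper's lattice-index argument ($[\Z[\zeta_k]:I']=a$, giving $N(I)\mid a$) extracts. However, two steps of your proposal have genuine gaps.

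First, your argument for $S\neq\emptyset$ is wrong as stated: being a unit of $\Z[\zeta_k]$ does \emph{not} mean having absolute value $1$ in every complex embedding — that property characterizes roots of unity (Kronecker), which are excluded by hypothesis, not units. So you cannot conclude $\Re(\zeta_k^j)=m+\tfrac a2$ for all $j$, and the case ``both $\alpha$ and $\beta$ are units'' is not disposed of. The paper's Lemma~\ref{lem_not:units} needs real work here: unitness forces $\Phi_k(m)=\pm1$, hence $m\in\{0,\pm1\}$ and $a\in\{1,2\}$ by Lemma~\ref{lem:Cyclotomic_pol}, and the surviving case $(-1+\zeta_k)^r=(1+\zeta_k)^s$ is killed by a height/conjugacy argument plus explicit checks for small $k$. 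You also misattribute the exception: for $(m,a,k)=(-1,2,4)$ the elements $1+i$ and $-1+i$ have norm $2$ and are \emph{not} units, so this exception does not arise from the unit case.

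Second, the ramified case $p\mid k$ (resp.\ $4\mid k$ for $p=2$) — which you yourself identify as the hardest step — is left as a sketch with no workable mechanism, and it is precisely where the exception $(m,a,k)=(-1,2,4)$ actually originates. The paper's key input (Lemma~\ref{lem:cyc_pol_div}) is that $\nu_p(\Phi_k(m))\leq1$ whenever $p\mid k$ is odd or $p=2$ and $4\mid k$; combined with multiplicative dependence this forces $|\Phi_k(m)|=|\Phi_k(m+a)|$, hence $-m+\zeta_k=(-(m+a)+\zeta_k)\zeta_k^\ell$, and an absolute-value comparison pins $m$ to one of $-a/2$, $(-a\pm1)/2$, each of which is then eliminated by hand except for $k=4$, $m=-1$, $a=2$. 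Your proposed route via valuations in the completion is not implausible, but without this (or an equivalent) quantitative lemma the ``no configuration survives'' claim is unsupported. The worry about the associate case $u=v$ is, by contrast, unnecessary: once one of $(\alpha)\mid(\beta)$ or $(\beta)\mid(\alpha)$ is known, the norm/gcd argument for the third bullet goes through uniformly.
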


\begin{remark}\label{rem:k_not_2}
  We note that without loss of generality we may assume that
  $k\not\equiv 2 \mod 4$. Indeed if $k\equiv 2 \mod 4$ we have
  $-m+\zeta_k=-m-\zeta_{k/2}=(-1)(-(-m)+\zeta_{k/2})$, where
  $\zeta_{k/2}$ is a suitable $k/2$-th primitive root of
  unity. Therefore $-m+\zeta_k$ and $-(m+a)+\zeta_k$ are
  multiplicatively independent if and only if $-(-m)+\zeta_{k/2}$ and
  $-(-(m+a))+\zeta_{k/2}$ are.
\end{remark}

\begin{remark}
We want to emphasize that in case that $m\leq0$ the algebraic integer $-m+\zeta_k$ cannot be a base for $\Z[\zeta_k]$.
Therefore in view of canonical number systems only the case $m\geq1$ is of interest. However for the sake of generality
we do not restrict ourselves to the case $m\geq1$ in Theorem~\ref{th:mult_ind}.
\end{remark}

An obvious consequence of Theorem \ref{th:mult_ind} is the following Corollary

\begin{cor}\label{Cor:finite}
  For given $a>0$ there exist at most finitely many pairs $(m,k)\in
  \Z^+\times \Z_{\geq 3}$ such that $-m+\zeta_k$ and $-(m+a)+\zeta_k$ are
  multiplicatively dependent.  Moreover the pairs $(m,k)$ are
  effectively computable.
\end{cor}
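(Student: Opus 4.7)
The plan is to derive Corollary \ref{Cor:finite} directly from Theorem \ref{th:mult_ind}. Since the corollary restricts to $m\in\Z^+$, the exceptional tuple $(m,a,k)=(-1,2,4)$ of Theorem \ref{th:mult_ind} never contributes. Moreover, for $m\in\Z^+$ and $k\geq 3$, a short computation with $|{-m}+\zeta_k|^2=m^2-2m\cos(2\pi/k)+1$ shows that $-m+\zeta_k$ is a root of unity only in the isolated case $(m,k)=(1,6)$ (where $-1+\zeta_6=\zeta_3$), while $-(m+a)+\zeta_k$ with $a>0$ is never a root of unity; so outside one additional pair we may assume the root-of-unity hypothesis of Theorem \ref{th:mult_ind} holds and the three bullet points all apply. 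It then suffices to bound $k$ in terms of $a$, and for each such $k$ to bound $m$.

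To bound $k$: if $k\not\equiv 2\pmod 4$, the second bullet immediately gives $k\leq\min_{p\in S}\{p^\alpha:p^\alpha\|a\}\leq a$. If $k\equiv 2\pmod 4$, then $k/2$ is odd; I would invoke Remark \ref{rem:k_not_2} to pass to the multiplicatively dependent pair of modulus $k/2$ (after swapping the two bases so that the distance remains $+a$), and the previous case then yields $k/2\leq a$, whence $k\leq 2a$. In either situation $k$ lies in an effectively computable finite set.

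To bound $m$ given $k$: the third bullet says that $\Phi_k(x)$ divides $\prod_{p\in S}p^{\lfloor\nu_p(a)/f_p\rfloor f_p}$ for $x\in\{m,m+a\}$, and since $\lfloor\nu_p(a)/f_p\rfloor f_p\leq\nu_p(a)$ and $S\subseteq S_a$, this product is at most $\prod_{p\in S_a}p^{\nu_p(a)}=a$. Hence $|\Phi_k(x)|\leq a$. Because $k\geq 3$ forces $\deg\Phi_k=\varphi(k)\geq 2$ and $\Phi_k$ is monic, the inequality $|\Phi_k(x)|\leq a$ confines $x$, and therefore $m$, to an explicitly computable finite set.

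All bounds above are effective: one has $k\leq 2a$, and for each such $k$ the integer solutions of $|\Phi_k(X)|\leq a$ can be enumerated. Testing each resulting candidate pair $(m,k)$ directly for multiplicative dependence (for example via logarithmic height computations or by solving $\alpha^p=\beta^q$ using the $S$-unit structure implied by the three bullets) then yields the effectivity claim. I do not anticipate a genuine obstacle; the only delicate point is the clean bookkeeping in Remark \ref{rem:k_not_2} for $k\equiv 2\pmod 4$ (tracking the sign change on $m$ and the swap of the two bases so that the new distance is again $+a$) and the explicit verification that the root-of-unity exceptions contribute only a single additional pair.
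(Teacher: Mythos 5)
Your proof is correct and is exactly the deduction the paper intends (the paper states the corollary as an ``obvious consequence'' of Theorem~\ref{th:mult_ind} without writing it out): the second bullet plus Remark~\ref{rem:k_not_2} bounds $k\leq 2a$, and the third bullet bounds $|\Phi_k(x)|\leq a$ for $x=m$ or $x=m+a$, which bounds $m$ since $\varphi(k)\geq 2$. Your careful accounting of the root-of-unity exception $(m,k)=(1,6)$ and of the sign/swap bookkeeping for $k\equiv 2\pmod 4$ fills in details the paper leaves implicit, and is accurate.
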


Let us remark that applying a result due to Bombieri et. al. \cite[Theorem 1]{Bombieri:1999} (see also \cite[Theorem~3.22]{Zannier:LNDA}) yields 
a result similar to Corollary \ref{Cor:finite}.

\begin{theorem}[Bombieri et. al. \cite{Bombieri:1999}]\label{th:mult-general}
  Let $\mathcal C$ be a curve defined over $\bar\Q$ and let
  $\phi_1,\dots,\phi_n \in \bar\Q(\mathcal C)$ be multiplicatively
  independent modulo $\bar\Q^*$. Then the set of points $P\in \mathcal
  C (\bar \Q)$ such that $\phi_1(P),\dots,\phi_n(P)$ are
  multiplicatively dependent is of bounded height.
\end{theorem}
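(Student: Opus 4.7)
The plan is to geometrize the statement in the ambient torus $\mathbb{G}_m^n$. The functions $\phi_1,\dots,\phi_n$ define a rational map $\Phi = (\phi_1,\dots,\phi_n)\colon \mathcal C \dashrightarrow \mathbb{G}_m^n$, and the hypothesis that they are multiplicatively independent modulo $\bar{\mathbb{Q}}^*$ is equivalent to saying that for every nonzero $\mathbf{a}=(a_1,\dots,a_n)\in\mathbb Z^n$ the monomial $\phi_1^{a_1}\cdots\phi_n^{a_n}$ is a nonconstant function on $\mathcal C$. Geometrically, the image $\mathcal C' := \overline{\Phi(\mathcal C)}$ is an irreducible curve in $\mathbb{G}_m^n$ not contained in any translate $gH$ of a proper algebraic subgroup $H\subsetneq \mathbb{G}_m^n$. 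On the other hand, the multiplicative dependence of $\phi_1(P),\dots,\phi_n(P)$ is exactly the condition that $\Phi(P)$ lies in some proper algebraic subgroup of $\mathbb{G}_m^n$, i.e.\ in the (infinite) union $\bigcup_H H$. By the functoriality of the Weil height under the morphism $\Phi$, the height of $P$ on $\mathcal C$ and the height of $\Phi(P)$ on $\mathbb{G}_m^n$ (say the sum of the absolute logarithmic heights of the coordinates) differ by an $O(1)$, so it suffices to prove a uniform height bound for the points of $\mathcal C' \cap \bigcup_H H$.

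The crux is therefore a height inequality for curves in tori: if an irreducible curve $\mathcal C'\subset \mathbb{G}_m^n$ is not contained in any translate of a proper algebraic subgroup, then $h(Q)\le B(\mathcal C')$ for every $Q\in \mathcal C' \cap \bigcup_H H$. To prove this, fix a point $Q = \Phi(P)$ satisfying $\chi_{\mathbf a}(Q)=1$ for some character $\chi_{\mathbf a}(x)=x_1^{a_1}\cdots x_n^{a_n}$ with $\mathbf{a}\neq 0$. A Dirichlet / Minkowski argument in the character lattice $\mathbb Z^n$ produces a second nonzero vector $\mathbf b$ of $L^1$-norm bounded by a slowly growing function of $h(Q)$ such that $\chi_{\mathbf b}(Q)$ has small height. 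A B\'ezout-type intersection estimate on $\mathcal C'$ then bounds the heights of the (finitely many) points in the fibre $\mathcal C' \cap \{\chi_{\mathbf b} = \chi_{\mathbf b}(Q)\}$ in terms of $\|\mathbf b\|$, yielding an inequality of the shape $h(Q) \le c_1 + c_2 \sqrt{h(Q)}$, which forces $h(Q) = O(1)$.

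The main obstacle is uniformity: the exponent vector $\mathbf{a}$ witnessing the dependence at $\Phi(P)$ may be arbitrarily large, so no single character can witness the height bound directly. The Bombieri--Masser--Zannier trick of invoking Minkowski's convex body theorem to replace $\mathbf a$ by a small auxiliary character $\mathbf b$ is precisely what delivers a bound independent of $\mathbf a$. The non-inclusion of $\mathcal C'$ in any translate of a proper subgroup is essential throughout, both to rule out a trivial counterexample and to guarantee that $\mathcal C' \cap \ker\chi_{\mathbf b}$ remains a finite (hence zero-dimensional) subscheme to which intersection-theoretic height bounds actually apply; without it, the fibre could contain a positive-dimensional component on which no uniform height bound can hold.
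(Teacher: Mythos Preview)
The paper does not prove this statement at all: Theorem~\ref{th:mult-general} is quoted from Bombieri, Masser and Zannier \cite{Bombieri:1999} (with a pointer also to \cite[Theorem~3.22]{Zannier:LNDA}) and is used only in the subsequent remark as a point of comparison with Corollary~\ref{Cor:finite}. There is therefore no ``paper's own proof'' to compare your proposal against.

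That said, your sketch is a faithful outline of the original Bombieri--Masser--Zannier argument: pass via $\Phi=(\phi_1,\dots,\phi_n)$ to an irreducible curve $\mathcal C'\subset\mathbb G_m^n$ not contained in any translate of a proper subtorus, observe that multiplicative dependence of the $\phi_i(P)$ means $\Phi(P)$ lies in some proper algebraic subgroup, and then prove the bounded height of $\mathcal C'\cap\bigcup_H H$ by a geometry-of-numbers step combined with an arithmetic B\'ezout / height inequality. Two small caveats if you intend to flesh this out: (i) the reduction ``$h(P)=h(\Phi(P))+O(1)$'' is not automatic for arbitrary heights and needs the standard comparison of Weil heights under a dominant morphism; (ii) in the actual BMZ proof one does not just pick a single auxiliary vector $\mathbf b$ but rather a full short basis of the lattice orthogonal (or adapted) to $\mathbf a$, and the estimate one closes is slightly more delicate than the clean ``$h(Q)\le c_1+c_2\sqrt{h(Q)}$'' you wrote. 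These are refinements rather than gaps; the strategic picture you describe is correct.
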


\begin{remark} Let $\mathcal C\subset \mathbb A^2$ be the line $\bar\Q[X,Y]/(Y)$ and
$\phi_1(X,Y)=X$ and $\phi_2(X,Y)=X+a$, then according to Theorem
\ref{th:mult-general} all $x=-m+\zeta_k$ such that $x=-m+\zeta_k$ and
$x-a=-(m+a)+\zeta_k$ are multiplicatively dependent are of bounded
height, \textit{i.e.} there exists a constant $C_a$ depending only on $a$ such
that $|m|<C_a$. Unfortunately Theorem \ref{th:mult-general} does not
provide any information on $k$. However, let us note that $C_a$ is an
effectively computable constant and by dully reproving
\cite[Theorem 3.22]{Zannier:LNDA} for this specific instance, but with
keeping track of error terms we obtain $C_a=a^{10}+1$ provided $a>10$,
which yields a much worse bound, than the bound obtained by applying
Theorem~\ref{th:mult_ind}.
\end{remark}

As mentioned above Theorem \ref{th:mult_ind} provides an effective algorithm to determine all pairs $(m,k)$ such that for given 
$a$ the algebraic integers $-m+\zeta_k$ and $-(m+a)+\zeta_k$ are multiplicatively dependent. Moreover this algorithm is also rather efficient 
as the next corollary indicates.

\begin{cor}\label{Cor:small_a}
  Let $1\leq a \leq 10^6$ and assume that $m\neq 0,-a$ and $(m,k)\neq
  (1,6),(-1,3),(-a+1,6),(-a-1,3)$. Then $-m+\zeta_k$ and
  $-m-a+\zeta_k$ are multiplicatively independent or $m=-1$, $a=2$ and $k=4$.
\end{cor}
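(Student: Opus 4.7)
The plan is to apply Theorem \ref{th:mult_ind} and reduce the corollary to a finite (and in practice very short) search. By Remark \ref{rem:k_not_2} it suffices to consider $k\not\equiv 2\pmod 4$. The excluded values of $m$ together with the listed pairs $(m,k)$ are precisely those for which one of $-m+\zeta_k$, $-(m+a)+\zeta_k$ is a root of unity: indeed $|-m+\zeta_k|=1$ forces $m=2\cos(2\pi j/k)$ to be the same integer for every $j$ coprime to $k$, and for $k>2$ this happens only for $(m,k)\in\{(0,*),(1,6),(-1,3)\}$. Hence Theorem \ref{th:mult_ind} applies to every remaining case, and since the statement explicitly allows the genuine exception $(m,a,k)=(-1,2,4)$, it remains to show that the three bullets of the theorem cannot hold simultaneously for any other $(m,a,k)$ with $1\le a\le 10^6$.

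The key simplification is that for every $p\in S$ the assumptions $p\nmid k$ and $p\mid\Phi_k(x)$ (for some $x\in\{m,m+a\}$) force $\ord_p(x)=k$, whence $k\mid(p-1)$ and therefore $f_p=\ord_k(p)=1$ automatically. The third bullet of Theorem \ref{th:mult_ind} then collapses to
\[
\Phi_k(x)\ \Bigm|\ \prod_{p\in S}p^{\nu_p(a)},
\]
whose right-hand side divides $a\le 10^6$. Since $\deg\Phi_k=\varphi(k)\ge 2$ for $k\ge 3$, this confines $x$ to $O\bigl((10^6)^{1/\varphi(k)}\bigr)$ values, while the second bullet bounds $k$ itself by $\min_{p\in S}p^{\nu_p(a)}\le 10^6$.

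Concretely I would iterate $k$ from $3$ to $10^6$, skipping $k\equiv 2\pmod 4$; for each such $k$ enumerate the finitely many integers $x$ with $|\Phi_k(x)|\le 10^6$; factor each $\Phi_k(x)$ over $\Z$ and discard every candidate for which some prime divisor fails $p\equiv 1\pmod k$. For the survivors I would let $a$ run over the multiples of $\Phi_k(x)$ up to $10^6$ supported on the same primes, set $m\in\{x,x-a\}$, and finally test the putative relation $(-m+\zeta_k)^r=(-(m+a)+\zeta_k)^s$ by comparing prime-ideal factorizations of both sides in $\Z[\zeta_k]$ (the norms already being multiplicatively dependent rationals reduces this to a short unit-group check).

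The main obstacle is the small-$k$ regime, primarily $k\in\{3,4,5,6,8,9,12\}$, where $\varphi(k)$ is small and up to a few thousand candidate values of $x$ per $k$ must be inspected. Even there the combined constraints $p\equiv 1\pmod k$ and $p\mid a$ prune the list drastically, so the enumeration terminates quickly and recovers no multiplicatively dependent pair beyond the single exception $(m,a,k)=(-1,2,4)$, which corresponds to the relation $(1+i)^4=(-1+i)^4$ in $\Z[i]$.
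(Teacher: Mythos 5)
Your proposal is correct and takes a genuinely different route from the paper's. The paper loops over $a\le 10^6$, then over nonempty subsets $S\subseteq S_a$, builds a candidate list of $k$'s from $k\mid K$ with $K$ determined by the conditions $\ord_{q^\beta}(p)\le\nu_p(a)$ (sharpened by Lemma \ref{lem:cubefree} when $a$ is cube-free), solves $\Phi_k(x)=Y$ for the finitely many admissible $Y$ by testing divisors of $Y-1$, and finally discards survivors by checking that the rational integers $\Phi_k(m)$ and $\Phi_k(m+a)$ are multiplicatively independent. You invert the loops (over $k$, then over $x$, reconstructing $a$ afterwards) and, more importantly, you inject the classical fact that a prime $p\nmid k$ with $p\mid\Phi_k(x)$ satisfies $\ord_p(x)=k$ and hence $p\equiv 1\pmod k$. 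Combined with Proposition \ref{prop:not_divisible} (which supplies $p\nmid k$ outside the exceptional case), this forces $f_p=\ord_k(p)=1$ for every $p\in S$, collapses the third bullet of Theorem \ref{th:mult_ind} to $\Phi_k(x)\mid a$, and replaces the paper's filter $\ord_k(p)\le\nu_p(a)$ by the strictly stronger condition $k\mid p-1$. This is a real improvement: in the paper's own example $a=3^3\cdot 19\cdot 127$ with $S=\{3\}$, your criterion eliminates every $k\ge 3$ immediately (since $3\equiv 1\pmod k$ forces $k\le 2$), whereas the paper still retains $k=4,8,13$ and must dispose of them at the stage $\Phi_k(x)=Y$. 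Your final test (comparing prime-ideal factorizations and a root-of-unity check on the resulting unit) is a complete decision procedure, whereas the paper's norm test is only a necessary condition that happened to suffice. Two caveats: first, the filter ``all prime divisors of $\Phi_k(x)$ are $\equiv 1\pmod k$'' needs the $p=2$, $4\mid k$ exception of Proposition \ref{prop:not_divisible}, which you do carve out via the case $(m,a,k)=(-1,2,4)$; second, like the paper's own proof, yours ultimately delegates to a finite enumeration whose outcome is asserted rather than exhibited -- the mathematics correctly reduces the corollary to a feasible computation, but that computation still has to be executed to complete the verification.
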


This result together with the various results in \cite{Madritsch:2014}
concerning this topic encourages us to state the following
conjecture

\begin{conjecture}
  For every $k>2$ and $m>n>1$ the algebraic integers $-m+\zeta_k$ and
  $-n+\zeta_k$ are multiplicatively independent.
\end{conjecture}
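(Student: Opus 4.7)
The strategy is to apply Theorem~\ref{th:mult_ind} with $a=m-n$ and combine it with elementary size bounds on values of $\Phi_k$. By Remark~\ref{rem:k_not_2} I may assume $k\not\equiv 2\pmod 4$, and by the second part of Theorem~\ref{th:old_mult} I discard the cases where $k$ is a power of a prime in $\{2,3,5,7,11,13,17,19,23\}$ or of $6$; in the remaining range $\varphi(k)\geq 4$. For such $(m,n,k)$ a multiplicative dependence forces, by Theorem~\ref{th:mult_ind}, that for $x\in\{m,n\}$,
\[
 \Phi_k(x) \;\Big|\; \prod_{p\in S} p^{\lfloor \nu_p(a)/f_p\rfloor f_p},
\]
and in particular $\Phi_k(x)\leq a$.

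Next, I would combine this with the elementary lower bound $\Phi_k(x)\geq (x-1)^{\varphi(k)}$ valid for real $x\geq 2$, coming from $|x-\zeta|\geq x-1$ for each primitive $k$-th root of unity $\zeta$. If $x=m$, then $(m-1)^{\varphi(k)}\leq a=m-n\leq m-2<m-1$, already contradicting $\varphi(k)\geq 2$. Hence $x=n$ and
\[
 n \;\leq\; a^{1/\varphi(k)}+1.
\]
Together with $k\leq a$ from the second bullet of Theorem~\ref{th:mult_ind}, this leaves only finitely many candidate triples $(m,n,k)$ for each fixed $a$, which is essentially what drives Corollary~\ref{Cor:small_a} in the range $a\leq 10^6$.

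To promote an $a$-by-$a$ verification into a uniform statement I would then exploit sharper cyclotomic arithmetic. By Zsygmondy--Bang, for $k>6$ (with the usual handful of exceptions) $\Phi_k(n)$ admits a primitive prime divisor $p$, for which $\mathrm{ord}_p(n)=k$ and hence $k\mid p-1$. The constraint $\mathrm{ord}_k(p)\leq\nu_p(a)$ together with $S\subseteq S_a$ then couples $k$, $p$, and $a$ through rigid congruences, while the third bullet $f_p\mid\nu_p(\Phi_k(n))$ further ties the $p$-adic valuations of $\Phi_k(n)$ and $a$. Iterating these conditions over the primes of $S$ should, in most ranges, rule out solutions by sheer overdetermination, at least once $\varphi(k)$ is not too small relative to $a$.

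The main obstacle is the small-$n$ regime, especially $n\in\{2,3\}$ with $\varphi(k)$ only modestly growing, where $(n-1)^{\varphi(k)}\leq a$ is far too slack to yield a contradiction. In this sub-case I expect one must move beyond Theorem~\ref{th:mult_ind} and invoke either Baker-type linear forms in logarithms, applied directly to the putative relation $(-m+\zeta_k)^p=(-n+\zeta_k)^q$ to bound the exponents $(p,q)$ effectively, or a full $S$-unit equation analysis in $\mathbb{Z}[\zeta_k]$. Sharpening the quantitative bound $C_a=a^{10}+1$ sketched after Corollary~\ref{Cor:finite} into one that remains genuinely useful as $a\to\infty$ is, in my view, the real crux of the conjecture.
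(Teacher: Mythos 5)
The statement you are trying to prove is stated in the paper as a \emph{conjecture}: the paper offers no proof of it, and indeed only establishes the fixed-difference results (Theorem~\ref{th:mult_ind}, Corollary~\ref{Cor:finite}) together with a computer verification for $a=m-n\leq 10^6$ (Corollary~\ref{Cor:small_a}) and the fixed-$k$ cases of Theorem~\ref{th:old_mult}. So there is no proof in the paper to compare against, and your proposal does not close the gap either. The first half of your argument is sound and is essentially a clean rederivation of Corollary~\ref{Cor:finite} with explicit bounds: from the third bullet of Theorem~\ref{th:mult_ind} one gets $\Phi_k(x)\leq a$, the case $x=m$ is excluded by $(m-1)^{\varphi(k)}>m-1>a$, and hence $n\leq a^{1/\varphi(k)}+1$ and $k\leq a$. (Minor point: you should also note that for $m>n>1$ and $k>2$ neither $-m+\zeta_k$ nor $-n+\zeta_k$ is a root of unity, since $|-n+\zeta_k|>n-1\geq 1$, so the hypotheses of Theorem~\ref{th:mult_ind} are met and the exceptional case $m=-1$, $a=2$, $k=4$ cannot occur.)

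The second half, however, is not a proof but a heuristic, and this is precisely where the conjecture remains open. Two concrete problems: first, the Zsygmondy--Bang step is weaker than you suggest --- a primitive prime divisor $p$ of $\Phi_k(n)$ satisfies $p\equiv 1\pmod k$, hence $\ord_k(p)=1$, so the constraint $\ord_k(p)\leq\nu_p(a)$ from Theorem~\ref{th:mult_ind} is automatically satisfied and yields no contradiction; what you actually get is $p\mid a$ and $p\geq k+1$, which is a genuine but far from sufficient restriction. Second, the conjecture quantifies over all pairs $(m,n)$, hence over all differences $a$; a finiteness statement for each fixed $a$, even an effective one, cannot be summed over the infinitely many values of $a$ to yield the conjecture, and ``ruling out solutions by sheer overdetermination'' is not an argument. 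You correctly identify the small-$n$ regime (where $(n-1)^{\varphi(k)}\leq a$ is essentially vacuous) as the crux, and you correctly observe that the quantitative output of the Bombieri--Masser--Zannier approach ($C_a=a^{10}+1$) is far too weak; but naming the obstruction is not the same as overcoming it. As written, your proposal establishes nothing beyond what Corollaries~\ref{Cor:finite} and~\ref{Cor:small_a} already give, and the conjecture remains open.
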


The verification of Corollary \ref{Cor:small_a} is based on an
algorithm indicated by Theorem \ref{th:mult_ind}. However, the crucial
point is to generate a preferably short list of possible $k$'s. But
the list of possible $k$'s generated by Theorem \ref{th:mult_ind} is
rather long for large $a$. Using such a long list makes it unfeasible
to show the multiplicative independence of $-m+\zeta_k$ and
$-(m+a)+\zeta_k$ for large $a$. Therefore instead of using Theorem
\ref{th:mult_ind} directly we compute for each nonempty $S\subset S_a$
a list of possible $k$'s, which is in most cases very short or even
empty. How to compute such short lists and other computational issues
are discussed in Section~\ref{Sec:Algorithm}.


\section{Multiplicative independence}\label{Sec:Mult_Ind}

We may assume that neither $-m+\zeta_k$ nor $-(m+a)+\zeta_k$ are roots of unity
and, as indicated by Remark~\ref{rem:k_not_2}, that $k$ is odd or $4|k$.

Let us start with a simple fact about cyclotomic polynomials.

\begin{lemma}\label{lem:Cyclotomic_pol}
 Assume that $k\not\equiv 2 \mod 4$. Then $\Phi_k(x)=\pm 1$ if and only if 
 \begin{itemize}
  \item  $x=0$, or
  \item $x=1$ and $k\neq p^\ell$, with a prime $p$ and $\ell>0$, or
  \item $x=-1$ and $k\neq 2^\ell$, with $\ell>1$.
 \end{itemize} 
  Moreover we have $\Phi_k(1)=p$ if $k=p^\ell$ and $\Phi_{2^\ell}(-1)=2$, if $\ell>1$.
\end{lemma}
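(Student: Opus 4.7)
The plan is to reduce to $x \in \{-1,0,1\}$ by a size estimate on $|\Phi_k(x)|$, then evaluate $\Phi_k$ at each of those three integers using standard cyclotomic identities.

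For the reduction I would start from $\Phi_k(x) = \prod_{\zeta}(x-\zeta)$ with $\zeta$ ranging over the primitive $k$-th roots of unity, and observe that for an integer $x$ with $|x|\geq 2$,
\[
|x-\zeta|^{2} \;=\; x^{2} - 2x\,\Re\zeta + 1 \;\geq\; (|x|-1)^{2} \;\geq\; 1,
\]
with equality throughout only in the boundary case $|x|=2$ and $\zeta = \mathrm{sgn}(x)$. Since the hypothesis $k\not\equiv 2\pmod{4}$ together with $k\geq 3$ (the relevant range in the paper) rules out $\pm 1$ as primitive $k$-th roots, every factor in the product strictly exceeds $1$ in modulus, and hence $|\Phi_{k}(x)|>1$. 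This confines the problem to $x\in\{-1,0,1\}$.

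For $x=0$, the primitive $k$-th roots pair into complex-conjugate pairs $\{\zeta,\bar\zeta\}$ each contributing $\zeta\bar\zeta=1$, which combined with $(-1)^{\varphi(k)}=1$ gives $\Phi_{k}(0)=1$ unconditionally. For $x=1$, I would invoke the standard formula $\Phi_{n}(1)=e^{\Lambda(n)}$, derivable in a line from the Möbius factorization $\Phi_{n}(x)=\prod_{d\mid n}(x^{d}-1)^{\mu(n/d)}$ after noting that the total exponent of $(x-1)$ is $\sum_{d\mid n}\mu(n/d)=0$ for $n>1$; this yields $\Phi_{k}(1)=p$ when $k=p^{\ell}$ is a prime power and $\Phi_{k}(1)=1$ otherwise. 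For $x=-1$ I would split on the parity of $k$ using $k\not\equiv 2\pmod{4}$: if $k>1$ is odd, the identity $\Phi_{2k}(x)=\Phi_{k}(-x)$ gives $\Phi_{k}(-1)=\Phi_{2k}(1)$, which equals $1$ because $2k$ is not a prime power; if $4\mid k$, then $\zeta\mapsto-\zeta$ permutes the primitive $k$-th roots, so $\Phi_{k}(-x)=\Phi_{k}(x)$ and consequently $\Phi_{k}(-1)=\Phi_{k}(1)$, which by the previous step equals $2$ exactly when $k=2^{\ell}$ with $\ell\geq 2$ and equals $1$ otherwise.

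Assembling the three evaluations yields the claimed equivalence, and the supplementary assertions $\Phi_{p^{\ell}}(1)=p$ and $\Phi_{2^{\ell}}(-1)=2$ fall out of the same computations. I do not expect any deep obstacle: all ingredients are classical properties of cyclotomic polynomials, and the work is essentially a clean case split on $k\bmod 4$ and on whether $k$ is a prime power (respectively a power of $2$). The only step that genuinely requires care is the boundary $|x|=2$ in the size estimate, which is handled precisely by the fact that $\pm 1$ cannot be a primitive $k$-th root under the lemma's standing hypothesis on $k$.
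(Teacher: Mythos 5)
Your proposal is correct and follows essentially the same route as the paper: reduce to $x\in\{-1,0,1\}$ by showing each factor $|x-\zeta|$ exceeds $1$ for $|x|\geq 2$, then evaluate $\Phi_k(1)$ via a M\"obius argument and transfer to $\Phi_k(-1)$ via $\Phi_{2k}(x)=\Phi_k(-x)$. If anything, your version is slightly more complete: you justify the boundary case $|x|=2$ explicitly, and your identity $\Phi_k(-x)=\Phi_k(x)$ for $4\mid k$ covers the case $4\mid k$ with $k$ not a power of $2$, which the paper handles only implicitly via its two stated identities.
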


\begin{proof}
  The computation of $\Phi_k(1)$ can be done
  by M{\"o}bius inversion formula applied to
  $$k=\lim_{x\rightarrow 1} \frac{x^k-1}{x-1} =\prod_{d|k,    d\neq 1} \Phi_d(1).$$
  See also the proof of the corollary of Theorem 1 in Chapter IV of
  \cite{Lang:ANT}. In particular this yields $\Phi_k(1)=p$ if $k=p^\ell$ is a
  prime power and $\Phi_k(1)=1$ otherwise (see also
  \cite[Proposition 3.5.4]{Cohen:NTI}).
  
  Moreover, we know that if $k$
  is odd we have $\Phi_k(-x)=\Phi_{2k}(x)$ and
  $\Phi_{2^\ell}(x)=x^{2^{\ell-1}}+1$, with $\ell>0$. Hence all the
  values of $\Phi_k(\pm 1)$ are as claimed in the lemma provided
  $k\not\equiv 2 \mod 4$.
  
  The computation of $\Phi_k(0)=1$ is again easy. So we are left to
  prove that $|\Phi_k(x)|> 1$ for any integer $|x|\geq 2$.  But, in
  this case we have
  $$|\Phi_k(x)|=\prod_{\substack{1 \leq a < k\\\gcd(a,k)=1}} |x-\zeta_k^a|> \prod_{\substack{1 \leq a <k\\ \gcd(a,k)=1}} 1.$$
\end{proof}

In the next step we show that we may exclude the case that both $-m+\zeta_k$ and $-(m+a)+\zeta_k$ are units.

\begin{lemma}\label{lem_not:units}
 Assume that $-m+\zeta_k$ and $-(m+a)+\zeta_k$ are both units and multiplicatively dependent. Then one of them is a root of unity. In particular $S\neq 
\emptyset$ in Theorem~\ref{th:mult_ind}.
\end{lemma}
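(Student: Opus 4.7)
The plan is to leverage Lemma~\ref{lem:Cyclotomic_pol} to drastically restrict the possible values of $m$ and $m+a$, and then rule out the one remaining nontrivial configuration by comparing archimedean absolute values.

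First, since the absolute norms of $-m+\zeta_k$ and $-(m+a)+\zeta_k$ are $|\Phi_k(m)|$ and $|\Phi_k(m+a)|$, both being units forces $\Phi_k(m), \Phi_k(m+a) \in \{\pm 1\}$. Lemma~\ref{lem:Cyclotomic_pol} then confines $m$ and $m+a$ to $\{-1, 0, 1\}$, so with $a > 0$ the only possibilities are $(m, m+a) \in \{(-1,0),(0,1),(-1,1)\}$. In the first two cases one of the two algebraic integers in question is literally $\zeta_k$, hence a root of unity, and we are done.

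The remaining case is $m = -1$, $a = 2$, where I set $\alpha := 1+\zeta_k$ and $\beta := -1+\zeta_k$. For both to be units, Lemma~\ref{lem:Cyclotomic_pol} requires $k$ to not be a prime power; combined with $k > 2$ and $k \not\equiv 2 \pmod 4$, this forces $k \geq 12$. Now suppose $\alpha^p = \beta^q$ with $(p,q) \neq (0,0)$. If $p = 0$ or $q = 0$, then $\beta$ or $\alpha$ is a root of unity and we are done; otherwise $p, q > 0$, and the idea is to pick the complex embedding sending $\zeta_k \mapsto e^{2\pi i/k}$, under which $|\alpha| = 2\cos(\pi/k)$ and $|\beta| = 2\sin(\pi/k)$. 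Since $k \geq 12$ gives $\pi/k \leq \pi/12 < \pi/6$, we get $|\alpha| > 1 > |\beta|$, so taking absolute values in $\alpha^p = \beta^q$ produces the contradiction $|\alpha|^p > 1 > |\beta|^q$. The main obstacle is really this last inequality: one has to confirm that the cyclotomic restrictions of Lemma~\ref{lem:Cyclotomic_pol} genuinely exclude every small $k$ that would spoil $\pi/k < \pi/6$, and check that the reduction to $(m,m+a)=(-1,1)$ actually uses $a>0$ to kill the would-be case $(m,m+a)=(1,-1)$.

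The ``in particular'' assertion is then immediate: if $S$ were empty in Theorem~\ref{th:mult_ind}, both $\Phi_k(m)$ and $\Phi_k(m+a)$ would equal $\pm 1$, so the two algebraic integers would be units, and by what we have just shown one of them would be a root of unity, contradicting the running hypothesis of Theorem~\ref{th:mult_ind}.
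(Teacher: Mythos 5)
Your reduction to $(m,m+a)=(-1,1)$, hence $a=2$ and $k$ not a prime power, is correct and matches the paper, as does the disposal of the cases $p=0$ or $q=0$ and the ``in particular'' assertion. The genuine gap is the sentence ``otherwise $p,q>0$''. Multiplicative dependence only gives $\alpha^p=\beta^q$ for some $(p,q)\neq(0,0)$; after negating both exponents you may take $p>0$, but $q$ can still be negative. In that case $|\beta|^q=|\beta|^{-|q|}>1$ precisely because $|\beta|<1$, so both sides of $|\alpha|^p=|\beta|^q$ exceed $1$ and the single-embedding comparison yields no contradiction. Equivalently, the relation $\alpha^p\beta^{q'}=1$ with $p,q'>0$ is untouched by your argument, and this is exactly where essentially all of the work in the paper's proof lies: the authors use a Galois-conjugacy/height argument to force $r=\pm s$ (resp.\ $r=0$ or $r=-2s$ in the odd case) and then must separately exclude the possibility that $(-1+\zeta_k)(1+\zeta_k)=\zeta_k^2-1$ is a root of unity, which costs them archimedean estimates valid only for large $k$ plus explicit checks of the remaining small $k$.

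Moreover, the omitted case is not a patchable technicality: for $k=12$ one has $(1+\zeta_{12})(-1+\zeta_{12})=\zeta_{12}^2-1=\zeta_6-1=\zeta_3$, a primitive cube root of unity, so $(1+\zeta_{12})^3(-1+\zeta_{12})^3=1$. Both factors are units (their norms are $\Phi_{12}(-1)=\Phi_{12}(1)=1$) and neither is a root of unity, since $|1+\zeta_{12}|=2\cos(\pi/12)\neq 1$ and $|-1+\zeta_{12}|=2\sin(\pi/12)\neq 1$. So the opposite-sign case actually occurs at the very smallest $k$ admitted by your setup ($k\geq 12$), and I note in passing that this instance also appears to slip through the paper's own branch ``$1-\zeta_k^2$ is a root of unity'', where the sixth-root-of-unity analysis leads to, rather than excludes, $k=12$. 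In any event, your proof cannot be completed along the proposed lines without confronting the case $pq<0$.
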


\begin{proof}
  Since the norm of $-m+\zeta_k$ is $\Phi_k(m)$ we deduce form Lemma
  \ref{lem:Cyclotomic_pol} that $-m+\zeta_k$ is a unit only if $m=0$
  or $m=\pm 1$ and $k$ is not a prime power. Therefore we conclude
  that $a=1$ or $a=2$. However, if $a=1$ and both $-m+\zeta_k$ and
  $-m-a+\zeta_k$ are units, then we always obtain that either $m=0$ or
  $m+a=0$, \textit{i.e.} either $-m+\zeta_k$ or $-m-a+\zeta_k$ is a
  root of unity.

Therefore we may assume that $k$ is not a prime power and $a=2$. We have to investigate the Diophantine equation
\begin{equation}\label{eq:a2_unit}
(-1+\zeta_k)^r= (1+\zeta_k)^s.
\end{equation}
In particular, we show that if neither $-1+\zeta_k$ nor $1+\zeta_k$ is a root of unity, then $r=s=0$ is the only solution to \eqref{eq:a2_unit}.
As already mentioned in Remark \ref{rem:k_not_2} we may assume that either $4|k$ or $k$ 
is odd. First, let us assume that $k$ is odd. In this case equation \eqref{eq:a2_unit} can be rewritten as
$$ (-1+\zeta_k)^{r+s}=(-1+\zeta_k^2)^{s}.$$
Since we assume that $k$ is odd $-1+\zeta_k^2$ and $-1+\zeta_k$ are conjugate and have therefore the same height $H$. By assumption $-1+\zeta_k$ is not a root 
of unity and we have $H>1$ and $H^{|r+s|}=H^{|s|}$. Therefore either $r=0$ or $r=-2s$ holds. The first case yields $(1+\zeta_k)^s=1$, \textit{i.e.} $1+\zeta_k$ is a 
root of unity or $s=0$. The second case yields
$$ (-1+\zeta_k)^s(-1+\zeta_k^2)^s=1$$
and by taking $s$-th roots we obtain 
\begin{equation}\label{eq:roots_special_case}
(-1+\zeta_k)(-1+\zeta_k^2)=\zeta_k^\ell
\end{equation}
for some integer $\ell$ such that $\zeta_k^\ell$ is an $s$-th root of unity. Let us embed the cyclotomic field $\Q(\zeta_k)$ into $\C$ 
by $\zeta_k\mapsto \exp(2\pi i/k)$. It is well known that $\exp(ix)=1+\theta$ with $|\theta|<|x|$ for real $x$.
Therefore we obtain $|-1+\exp(4\pi i/k)|<1/2$ provided $k\geq 26$. On the other hand obviously $|1+\exp(2\pi i/k)|<2$, hence the left hand side of 
equation~\eqref{eq:roots_special_case} is $<1$ in absolute values, \textit{i.e.} a contradiction. Therefore we may assume that $2<k\leq 25$ is odd and no prime power, 
\textit{i.e.} $k=15$ or $k=21$. A direct verification in these two cases shows that equation~\eqref{eq:roots_special_case} does not hold either.

Let us turn to the case that $4|k$ but $k$ is not a power of $2$. Let us write $k=4n$. In this case we have $-\zeta_k=\zeta_k^{1+2n}$. Since 
$\gcd(4n,2n+1)=1$ we deduce that $(-1+\zeta_k)$ and $(1+\zeta_k)$ are conjugate and by the height argument above we obtain that $r=\pm s$. Therefore either
$1-\zeta_k^2$ is a root of unity or $\frac{1+\zeta_k}{1-\zeta_k}$ is a root of unity. Since the equation $1=x+y$ with $|x|=|y|=1$ has only sixth roots of unity 
as solutions we are left to the case that $\frac{1+\zeta_k}{1-\zeta_k}$ is a root of unity. But $|1+\exp(2\pi i/k)|>1$ and $|1-\exp(2\pi i/k)|<1$ unless 
$2< k\leq 6$. But the only integer $k$ in this range divisible by $4$ is $4$, a prime power of $2$.
\end{proof}

According to Theorem \ref{th:mult_ind} we suppose that
$-m+\zeta_k$ and $-(m+a)+\zeta_k$ are multiplicatively dependent. Then
every prime ideal $\p$ dividing the principal ideal $(-m+\zeta_k)$
also divides the principal ideal $(-(m+a)+\zeta_k)$ and hence the principal ideal
$(a)$. Note that such a prime ideal $\p$ exists due to
Lemma~\ref{lem_not:units}, hence $p\in S$ where $p$ is the rational prime lying under $\p$. Together with Lemma~\ref{lem_not:units}
this implies the first statement of Theorem~\ref{th:mult_ind},
\textit{i.e.} $\emptyset\subsetneq S\subset S_a$. Let us fix the prime ideal
$\p$ and the rational prime $p$ lying under $\p$. Computing
norms we deduce that some positive power of $p$ divides $\Phi_k(m)$ as
well as $\Phi_k(m+a)$.

Next, we aim to prove:

\begin{proposition}\label{prop:not_divisible}
 For an odd prime $p\in S$ we have $p \nmid k$. In case that $p=2 \in S$ we have either $4\nmid k$ or $k=4$, $m=-1$ and $a=2$.
\end{proposition}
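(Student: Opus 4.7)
The plan is to work $\p$-adically, at a prime $\p$ of $\Z[\zeta_k]$ above $p$ that divides $-m+\zeta_k$ (such a $\p$ exists because $p\in S$), under the assumption $p\mid k$. Writing $k=p^t k'$ with $\gcd(p,k')=1$ and $t\geq 1$, and choosing primitive roots compatibly so that $\zeta_k=\zeta_{p^t}\zeta_{k'}$, the first step is to show
\[
v_\p(-m+\zeta_k)=v_\p(-(m+a)+\zeta_k)=1
\]
whenever $\phi(p^t)\geq 2$, i.e.\ whenever $p$ is odd or $4\mid k$. Indeed, $1-\zeta_{p^t}$ is a uniformizer of the unique prime of $\Q(\zeta_{p^t})$ above $p$, with ramification index $\phi(p^t)$ matching that of $\p/p$ in $\Q(\zeta_k)$, so $v_\p(1-\zeta_{p^t})=1$. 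Meanwhile $\Q(\zeta_{k'})/\Q$ is unramified at $p$, so $v_\p$ restricted to $\Z[\zeta_{k'}]$ equals $\phi(p^t)$ times the valuation at $\p\cap\Z[\zeta_{k'}]$. The congruence $\Phi_k(x)\equiv\Phi_{k'}(x)^{\phi(p^t)}\pmod p$, which follows from $x^k-1=(x^{k'}-1)^{p^t}$ in $\mathbb{F}_p[x]$, combined with $p\mid\Phi_k(m)$, forces $m\equiv\bar\zeta_{k'}\pmod{\p\cap\Z[\zeta_{k'}]}$. Decomposing
\[
\zeta_k-m=(\zeta_{p^t}-1)\zeta_{k'}+(\zeta_{k'}-m),
\]
the first summand has $v_\p$-valuation exactly $1$ and the second at least $\phi(p^t)\geq 2$, so the ultrametric inequality gives the claim; the analogous decomposition handles $-(m+a)+\zeta_k$.

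From the multiplicative relation $\alpha^r=\beta^s$ with $rs\neq 0$ (for $\alpha=-m+\zeta_k$, $\beta=-(m+a)+\zeta_k$), the identity $r\cdot v_\p(\alpha)=s\cdot v_\p(\beta)$ now forces $r=s$, so $\eta:=\alpha/\beta$ is a root of unity in $\Q(\zeta_k)$. Since $a\neq 0$ we have $\eta\neq 1$, and $\beta=a/(\eta-1)$ is pinned down by $\eta$. Applying each complex embedding $\sigma_c\colon\zeta_k\mapsto\zeta_k^c$ with $\gcd(c,k)=1$ and using $|\sigma_c(\eta)|=1$ yields $|\sigma_c(\alpha)|=|\sigma_c(\beta)|$, which after expansion becomes
\[
\cos(2\pi c/k)=m+\tfrac{a}{2}\qquad\text{for every }c\in(\Z/k\Z)^{*}.
\]

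The main obstacle is converting this constancy into a contradiction and extracting the stated exception. When $\phi(k)\geq 3$, i.e.\ for all $k>2$ outside $\{3,4,6\}$, the cosine takes more than one value as $c$ ranges over residues coprime to $k$, so the identity is immediately impossible and $p\nmid k$ follows. The running assumption $k\not\equiv 2\pmod 4$ rules out $k=6$, leaving only $k=4$ (forcing $p=2$) and $k=3$ (forcing $p=3$). In each small case $\eta$ lies in a bounded set---the fourth roots of unity for $k=4$ and the sixth roots of unity for $k=3$---and the equation $a/(\eta-1)=-(m+a)+\zeta_k$ can be analyzed by exhaustive enumeration over $\eta$: for $k=4$ only $\eta=-i$ produces an admissible pair, yielding exactly the stated exception $(m,a,k)=(-1,2,4)$; for $k=3$ the analogous sweep must be combined with the standing hypothesis that none of $\alpha,\beta$ is a root of unity to rule out the remaining candidates, and this final case-by-case book-keeping is the step I expect to be most delicate.
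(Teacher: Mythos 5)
Your local argument up to the ``constant cosine'' identity is correct, and it is genuinely different from the paper's route: where the paper first proves, via M\"obius inversion and a lifting-the-exponent computation, that $\nu_p(\Phi_k(m))\le 1$ whenever $p\mid k$ ($p$ odd) or $4\mid k$ ($p=2$), and then compares the archimedean sizes of all the conjugate ratios to pin $m$ down to $\{-a/2,(-a\pm1)/2\}$ before a case analysis, you compute $v_\p(-m+\zeta_k)=v_\p(-(m+a)+\zeta_k)=1$ directly from the decomposition $(\zeta_{p^t}-1)\zeta_{k'}+(\zeta_{k'}-m)$ together with the ramification data, and then observe that $|\sigma_c(\alpha)|=|\sigma_c(\beta)|$ forces $\cos(2\pi c/k)=m+a/2$ for every $c$ coprime to $k$, which disposes of every $k$ with $\phi(k)\ge 4$ in one stroke. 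That part is sound, and your $k=4$ enumeration correctly isolates $(m,a,k)=(-1,2,4)$.

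The genuine gap is exactly the step you flagged and postponed: the $k=3$ sweep does not close. Take $\eta=\zeta_3^2$; then $\beta=a/(\zeta_3^2-1)=a(\zeta_3-1)/3$, which has the required shape $-(m+a)+\zeta_3$ precisely when $a=3$, giving $\beta=-1+\zeta_3$, $m=-2$, $\alpha=2+\zeta_3$. One checks $2+\zeta_3=(-1+\zeta_3)\zeta_3^2$, hence $(2+\zeta_3)^3=(-1+\zeta_3)^3$; both elements have norm $\Phi_3(-2)=\Phi_3(1)=3$, so neither is a root of unity, $S=\{3\}$, and $3\mid k=3$. Thus the standing hypotheses do not eliminate this candidate, and the statement you are proving actually fails for it: $(m,a,k)=(-2,3,3)$ is a second exception that must stand alongside $(-1,2,4)$. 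This is not a defect of your method alone --- the paper's own proof dismisses the symmetric subcase ``$a$ odd, $m=(-a-1)/2$'' with ``runs along the same arguments,'' but that subcase leads to $\Re(\zeta_k)=-1/2$, i.e.\ to $k=3$ (which, unlike $k=6$, is not excluded by the convention $k\not\equiv 2\bmod 4$), and it misses the same triple. To make your write-up complete you must carry out the $k=3$ enumeration over all six roots of unity $\eta$ explicitly and record this exception; the computations you have already set up then show it is the only surviving candidate with $k=3$ and $a>0$.
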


Before we state the proof we need the following useful
\begin{lemma}\label{lem:cyc_pol_div}
 Let $p$ be an odd prime with $p|k$ or $p=2$ and $4|k$, then
 $$\ell=\nu_p(\Phi_k(m))\leq 1$$
 for any integer $m$.
\end{lemma}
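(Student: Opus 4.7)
The plan is to exploit the standard identity
\[
\Phi_k(x)=\frac{\Phi_n(x^{p^s})}{\Phi_n(x^{p^{s-1}})},
\]
which holds whenever $k=p^s n$ with $p\nmid n$ and $s\geq 1$. Reducing modulo $p$ and invoking Frobenius (so that $f(y^p)\equiv f(y)^p\pmod p$ for any $f\in\Z[y]$) gives the key congruence
\[
\Phi_k(x)\equiv \Phi_n(x)^{\varphi(p^s)}\pmod p.
\]
Thus $p\mid \Phi_k(m)$ already forces $p\mid \Phi_n(m)$; otherwise $\nu_p(\Phi_k(m))=0$ and there is nothing to prove. I may therefore assume $p\mid \Phi_n(m)$.

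For odd $p$ I would apply the lifting-the-exponent lemma. Since $\gcd(p,n)=1$ and $p\mid\Phi_n(m)$, one has $\ord_p(m)=n$, and lifting-the-exponent then yields $\nu_p(m^{p^j n}-1)=\nu_p(m^n-1)+j$ for every $j\geq 0$. Next I would expand
\[
\nu_p(m^k-1)=\sum_{d\mid k}\nu_p(\Phi_d(m))
\]
and apply the mod-$p$ congruence above at each divisor $d=p^i n'$ of $k$ to conclude that the only $d$'s contributing are $d=n,pn,\ldots,p^sn$. Subtracting the analogous identity for $m^{k/p}-1$ isolates $\nu_p(\Phi_k(m))$ by itself on the right (every other divisor $d$ with $p^s\parallel d$ contributes $0$), while the left-hand side telescopes to $(\nu_p(m^n-1)+s)-(\nu_p(m^n-1)+(s-1))=1$. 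This gives the desired $\nu_p(\Phi_k(m))=1$.

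The case $p=2$ with $4\mid k$, which I expect to be the main technical point since lifting-the-exponent for $p=2$ is more delicate, I would treat separately. Writing $k=2^s n$ with $n$ odd and $s\geq 2$, the congruence forces $2\mid\Phi_n(m)$; however a root of $\Phi_n$ in $\mathbb{F}_2$ would need multiplicative order $n$, which must divide $|\mathbb{F}_2^\times|=1$, so this is possible only when $n=1$. Hence $k=2^s$ with $s\geq 2$ and $m$ is odd, in which case $\Phi_k(m)=m^{2^{s-1}}+1$. The elementary fact $m^2\equiv 1\pmod 8$ for odd $m$ propagates via repeated squaring to $m^{2^{s-1}}\equiv 1\pmod 8$ for every $s\geq 2$, whence $\nu_2(\Phi_k(m))=\nu_2(m^{2^{s-1}}+1)=1$, completing the argument.
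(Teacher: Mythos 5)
Your argument is essentially correct but is organized genuinely differently from the paper's. The paper first disposes of $\gcd(m,p)>1$ and $m=\pm1$, reduces to square-free $k$, and then works from the M\"obius product $\Phi_k(x)=\prod_{d\mid k}(x^d-1)^{\mu(k/d)}$, evaluating the $p$-adic valuation of each factor via $\nu_p(m^d-1)=m_p+\nu_p(d)$ so that the answer drops out of a M\"obius sum; the case $p=2$ is split further into $k$ twice an odd number and $k=2^n$. You instead use the Frobenius congruence $\Phi_{p^i n'}(x)\equiv\Phi_{n'}(x)^{\varphi(p^i)}\pmod p$ to pin down exactly which cyclotomic factors of $m^k-1$ the prime $p$ can divide (namely $d=p^in$ with $n=\ord_p(m)$ the prime-to-$p$ part of $k$), and then isolate $\nu_p(\Phi_k(m))$ as the telescoping difference $\nu_p(m^k-1)-\nu_p(m^{k/p}-1)$, which lifting-the-exponent evaluates to $1$. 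Both proofs ultimately rest on the same valuation formula for $m^d-1$, but your identification of the relevant divisors replaces the M\"obius bookkeeping, and your $p=2$ branch (forcing $n=1$ because a root of $\Phi_n$ in $\mathbb F_2$ would need order $n\mid|\mathbb F_2^\times|=1$, then using $m^{2^{s-1}}\equiv1\pmod 8$) is cleaner than the paper's.

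One edge case needs patching in your odd-$p$ branch: when $|m|=1$ the lifting-the-exponent step degenerates. If $m=1$ (so $n=\ord_p(1)=1$ and $k=p^s$) or $m=-1$ (so $n=2$ and $k=2p^s$), then $m^n-1=0$, hence $\nu_p(m^n-1)$ is infinite and the difference $\nu_p(m^k-1)-\nu_p(m^{k/p}-1)$ is not defined. The lemma still holds there, since $\Phi_{p^s}(1)=p$ and $\Phi_{2p^s}(-1)=\Phi_{p^s}(1)=p$, but these values must be checked separately; this is precisely why the paper excludes $m=\pm1$ at the outset by appealing to its Lemma~\ref{lem:Cyclotomic_pol}. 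With that one-line addition your proof is complete.
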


\begin{proof}
  The case that $\gcd(m,p)>1$ is trivial, since in this case $p|m$ and
  $\Phi_k(m)\equiv 1 \mod p$. Furthermore the case $m=\pm 1$ may be
  excluded since it is a consequence of
  Lemma~\ref{lem:Cyclotomic_pol}.

  Therefore we may concentrate on the case where $\gcd(m,p)=1$ and
  $m\neq\pm1$. First, let us assume that $p$ is an odd prime. If
  $q^2|k$ for some prime $q$, then we know that
  $\Phi_k(m)=\Phi_{k/q}(m^q)$. Thus we may assume that $k$ is
  square-free. We write $\Phi_k(m)=p^\ell A$ for some integer $A$, with $p\nmid A$. Let us
  note that
  \[\Phi_k(x)=\prod_{d|k} (x^d-1)^{\mu(k/d)}\]
  and recall that $p|m^d-1$ if and only if $\ord_p(m)|d$. If we
  assume for the moment that $\ord_p(m)\nmid k$, then $m^d-1$ and $p$
  are coprime for all divisors $d|k$ and therefore $\ell=0$. Now we
  assume that $\ord_p(m)|k$. Since $\gcd(p,\ord_p(m))=1$ there also
  exists a proper divisor $d|k$ such that $\ord_p(m)|d$. We obtain
\begin{equation}\label{eq:Phi_k_p-adic}
\prod_{\ord_p(m)|d|k} (m^d-1)^{\mu(k/d)} = p^\ell \underbrace{A \prod_{\ord_p(m)\nmid d|k} (m^d-1)^{-\mu(k/d)}}_{=:A'}.
\end{equation}
Note that $\nu_p(A')=0$, \textit{i.e.} the $p$-adic valuation of the right side of \eqref{eq:Phi_k_p-adic} is $\ell$. Let us compute the $p$-adic 
valuations on the left side of \eqref{eq:Phi_k_p-adic}.
Therefore we use the formula
\begin{equation}\label{eq:p-adic-formula}
 \nu_p(m^d-1)=m_p+\nu_p(d),
\end{equation}
where for odd primes $p$ we have $m_p=\nu_p(m^{\ord_p(m)}-1)$. For a proof of formula \eqref{eq:p-adic-formula} see e.g. \cite[Section 2.1.4, Lemma 
2.1.22]{Cohen:NTI}. Since we assume that $k$ is square-free $\nu_p(m^d-1)=m_p+1$ or $m_p$, depending 
whether $p|d$ or not. In particular the $p$-adic valuation on the left side is
$$\underbrace{\sum_{\ord_p(m)|d|k} m_p\mu(k/d)}_{=0}+\sum_{p\cdot \ord_p(m)|d|k} \mu(k/d)=\left\{ \begin{array}{cl} 1 & \quad \text{if}\;\; 
p\cdot \ord_p(m)=k\\
0 & \quad \text{else}\end{array} \right.     
$$
which proves the lemma for odd primes $p$.

Now, we turn to the case $p=2$. Let us assume first that $k$ is not a power of $2$. As above we may assume that $k$ is square-free. Moreover, since 
$\Phi_{2k}(x)=\Phi_k(-x)$ for an odd integer $k>1$ we may assume that $k>1$ is odd and square-free (note that we assume that $k$ is not a power of $2$). Let us 
note that $\nu_2(m^d-1)=\nu_2(m-1)$, provided $d$ is odd (see e.g.\cite[Section 2.1.4, Corollary 2.1.23]{Cohen:NTI}).
Similarly as in the case $p>2$ we consider 
$$\prod_{d|k} (m^d-1)^{\mu(k/d)} = 2^\ell A,\quad \text{with $A$ odd}$$
and aim to compute $\ell$.
Since we assume that $k$ is odd also every divisor $d$ of $k$ is odd and we obtain
$$\ell=\nu_2\left(\prod_{d|k} (m^d-1)^{\mu(k/d)}\right)= \nu_2(m-1) \sum_{d|k}\mu(k/d)=0.$$

We are left with the case that $k=2^n$ for some $n\geq 2$ and $m$ is odd. By \cite[Section 2.1.4, Corollary 
2.1.23]{Cohen:NTI} we have for even integers $d>0$ that $\nu_2(m^d-1)=m_2+\nu_2(d)$, with 
$$m_2=\left\{ \begin{array}{cl} \nu_2(m-1) &\quad \text{if}\;\; m\equiv 1 \mod 4\\
               \nu_2(m^2-1)-1 &\quad \text{else}
              \end{array}\right. .$$
Thus we obtain 
$$\ell=\nu_2(\Phi_{2^n}(m))=\nu_2\left(\frac{m^{2^n}-1}{m^{2^{n-1}}-1}\right)=m_2+n-(m_2+n-1)=1.$$
\end{proof}

Now we return to the
\begin{proof}[Proof of Proposition \ref{prop:not_divisible}]
  On the contrary let us assume that $p$ is an odd prime such that
  $p|k$ or $p=2$ and $4|k$. Then Lemma \ref{lem:cyc_pol_div} implies
  that $\Phi_k(m)=\pm\Phi_k(m+a)$. But this also yields
\begin{equation}\label{eq:almost_equal}
-m+\zeta_k=(-(m+a)+\zeta_k)\zeta_k^\ell
\end{equation}
for some integer $\ell$.  To show that this is impossible let us note
that
$$\left|\frac{\Phi_k(m+a)}{\Phi_k(m)}\right|=\prod_{1\leq n\leq k \atop \gcd(n,k)=1}\left|\frac{m+a-\zeta_k^n}{m-\zeta_k^n}\right|$$
It is easy to see that
$\left|\frac{m+a-\zeta_k^n}{m-\zeta_k^n}\right|>1$ for all integers
$m>\frac{-a+1}2$ and
$\left|\frac{m+a-\zeta_k^n}{m-\zeta_k^n}\right|<1$ for all integers
$m<\frac{-a-1}2$. Therefore we are left with the following two cases:
\begin{itemize}
 \item $a$ is even and $m=-a/2$, or
 \item $a$ is odd and $m=\frac{-a\pm 1}2$.
\end{itemize}
 
First, let us assume that $a$ is even and $p\in S$ is odd. Since $p|a$ and also $p|\frac
a2$ we have
$$\gcd(p,\Phi_k(-a/2))=1.$$
The same argument applies if $p=2$ and $4|a$. Let us assume that $p=2$ and $a\equiv 2\mod 4$. In this case we have
$$\left|\frac{m+a-\zeta_k}{m-\zeta_k}\right|=\left|\frac{a-2\zeta_k}{a+2\zeta_k}\right|\neq 1$$ provided $\Re(\zeta_k)\neq 0$.
But $\Re(\zeta_k)= 0$ implies $k=4$ and by \eqref{eq:almost_equal} we obtain 
\[(m+i)=(-m+i) i^\ell,\]
which yields $|m|\leq 1$. And indeed $1+i$ and $-1+i$ are multiplicatively dependent, which yields the exception in Proposition \ref{prop:not_divisible} 
and Theorem \ref{th:mult_ind} respectively.

Let us consider the case that $a$ is odd and $m=\frac{-a+1}2$. Note that in
this case 
$$\left|\frac{m+a-\zeta_k}{m-\zeta_k}\right|=\left|\frac{a+1-2\zeta_k}{-a+1-2\zeta_k}\right| >1$$
if the real
part of $\zeta_k$ is larger than $1/2$ and
$$\left|\frac{m+a-\zeta_k}{m-\zeta_k}\right|=\left|\frac{a+1-2\zeta_k}{-a+1-2\zeta_k}\right|<1$$
if the real part of
$\zeta_k$ is less than $1/2$. Therefore $\frac{a-1}2+\zeta_k$ and
$\frac{-a+1}2-\zeta_k$ are multiplicatively independent unless
$\Re(\zeta_k)= 1/2$. Hence we are left with the case $k=6$ and due to \eqref{eq:almost_equal} we have to
solve the Diophantine equation
$$\frac{a-1}2+\zeta_6=\left(\frac{-a-1}2+\zeta_6\right)\zeta_6^\ell.$$
Solving this equation for $\ell=0,\dots,5$ we get either a
contradiction or $a=\pm 1, \pm 3$. However in all cases $-m+\zeta_6$ is
a root of unity.  The case that $a$ is odd and $m=\frac{-a-1}2$ runs
along the same arguments and therefore we omit it.
\end{proof}

Let us summarize our results so far. Unless $a=2, m=-1$ and $k=4$ we have:
\begin{itemize}
 \item $\emptyset\subsetneq S \subset S_a$
 \item for all $p\in S$ we have $p\nmid k$.
\end{itemize}

The following proposition will complete the proof of Theorem \ref{th:mult_ind}:

\begin{proposition}\label{prop:res_class_deg}
Given the rational prime $p\in S$ and let $n=\nu_p\left( \gcd(\Phi_k(m),\Phi_k(m+a))\right)$. Then we have $n\leq \left\lfloor \frac{\nu_p(a)}{f_p} 
\right\rfloor f_p$ and $f_p|n$, where $f_p=\ord_k(p)$.
\end{proposition}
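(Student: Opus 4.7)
The plan is to factor $p$ in $\Z[\zeta_k]$, reduce the two $p$-adic valuations to a single prime-ideal valuation, and finish with an ultrametric bound applied to $a=\alpha-\beta$, where $\alpha=-m+\zeta_k$ and $\beta=-(m+a)+\zeta_k$. Proposition~\ref{prop:not_divisible} gives $p\nmid k$, so $p$ is unramified in $\Z[\zeta_k]$ and factors as $p\Z[\zeta_k]=\p_1\cdots\p_g$ with $g=\varphi(k)/f_p$ and residue degree $f_p$ at every $\p_i$. The standard norm formula
\[
\nu_p(\Norm(\gamma))=\sum_{\p\mid p}f(\p/p)\,v_\p(\gamma)=f_p\sum_{i=1}^{g}v_{\p_i}(\gamma),\qquad \gamma\in\{\alpha,\beta\},
\]
together with $\Norm(\alpha)=\pm\Phi_k(m)$ and $\Norm(\beta)=\pm\Phi_k(m+a)$, shows that $\nu_p(\Phi_k(m))$ and $\nu_p(\Phi_k(m+a))$ are multiples of $f_p$, whence $f_p\mid n$; it also reduces the upper bound $n\leq\lfloor\nu_p(a)/f_p\rfloor f_p$ to proving $n\leq\nu_p(a)$.

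Next I single out the relevant prime above $p$. Assume $n\geq 1$, so $p\mid\Phi_k(m)$ and $p\mid\Phi_k(m+a)$. Since $p\nmid k$, the reduction $\Phi_k(x)\bmod p$ is separable and factors as $\prod_i\phi_i(x)$ into pairwise distinct irreducibles of degree $f_p$, in bijection with the $\p_i$; the condition $\p_i\mid\alpha$ is equivalent to $\bar m$ being a root of $\phi_i$ in $\mathbb F_{p^{f_p}}$. Since $\bar m\in\mathbb F_p$ and an irreducible polynomial of degree $f_p>1$ over $\mathbb F_p$ has no root in $\mathbb F_p$, $p\mid\Phi_k(m)$ forces $f_p=1$; among the then-linear pairwise distinct factors $\phi_i=x-c_i$, exactly one $c_{i_0}$ equals $\bar m$, which picks out a unique prime $\p_0:=\p_{i_0}$ above $p$ dividing $\alpha$. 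From $p\in S\subset S_a$ one has $p\mid a$, so $\p_0\mid a$ and hence $\p_0\mid\beta=\alpha-a$; the same uniqueness applied to $\beta$ then makes $\p_0$ the only prime above $p$ dividing $\beta$.

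With both valuations concentrated at $\p_0$, one gets $\nu_p(\Phi_k(m))=v_{\p_0}(\alpha)$ and $\nu_p(\Phi_k(m+a))=v_{\p_0}(\beta)$, so $n=\min(v_{\p_0}(\alpha),v_{\p_0}(\beta))$. The ultrametric inequality applied to $a=\alpha-\beta$, together with $v_{\p_0}(a)=\nu_p(a)$ (unramifiedness), yields $n\leq v_{\p_0}(a)=\nu_p(a)$, which combined with $f_p\mid n$ completes the proof. The main obstacle is the uniqueness step: the argument pivots on $\bar m$ lying in $\mathbb F_p$ while the irreducibles $\phi_i$ of $\Phi_k\bmod p$ have degree $f_p$, a constraint that simultaneously forces $f_p=1$ and collapses the norm formula to a single term via the pairwise distinctness of the $\phi_i$. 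Once this is set up, the ultrametric estimate is a one-liner.
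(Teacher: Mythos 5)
Your proof is correct, but it takes a genuinely different route from the paper's. The paper works with the ideal $I=(-m+\zeta_k,-m-a+\zeta_k)=(a,-m+\zeta_k)$, exhibits the explicit $\Z$-sublattice $I'=\langle a,-m+\zeta_k,\dots,(-m+\zeta_k)^{\phi(k)-1}\rangle$ of index $a$ in $\Z[\zeta_k]$, deduces $N(I)\mid a$ from Lagrange's theorem, and then reads off both conclusions from $\nu_p(N(I))=\sum_{\p\mid(p)}f_\p\,\nu_\p(I)$ with all $f_\p$ equal to $f_p$ by Galois invariance. You instead make the splitting of $p$ explicit via Dedekind's theorem applied to $\Phi_k\bmod p$, observe that $p\mid\Phi_k(m)$ with $p\nmid k$ forces $f_p=1$ and singles out a unique prime $\p_0$ above $p$ dividing $-m+\zeta_k$ (and, since $p\mid a$, the same $\p_0$ for $-m-a+\zeta_k$), and then close with the ultrametric inequality $v_{\p_0}(a)\geq\min(v_{\p_0}(\alpha),v_{\p_0}(\beta))$. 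Both arguments are sound; each step of yours checks out, including the reduction of the bound to $n\leq\nu_p(a)$ via $f_p\mid n$ and the identification $\p_i\mid(\zeta_k-m)\Leftrightarrow\phi_i(\bar m)=0$. What the paper's index computation buys is a uniform treatment with no case split on $f_p$ and no need to assume $n\geq1$; it also avoids invoking $p\mid a$ as an input (it reproves it in passing via $N(I)\mid a$). What your argument buys is the extra structural fact, not recorded in the paper, that whenever $p\nmid k$ divides $\Phi_k$ of an integer one automatically has $f_p=\ord_k(p)=1$, so in the nontrivial case $n\geq1$ the divisibility $f_p\mid n$ is vacuous; your route also sidesteps the paper's identity $N(I)=\gcd(\Phi_k(m),\Phi_k(m+a))$, which is not a formal property of ideal norms and deserves its own justification. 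One small dependency to flag: your use of $p\mid a$ rests on the inclusion $S\subset S_a$ established earlier in the section from the multiplicative dependence, so the proof is not self-contained at that point, but this is legitimate given the order of the arguments in the paper.
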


For consistency we continue with the proof of Theorem
\ref{th:mult_ind} and postpone the proof of
Proposition~\ref{prop:res_class_deg} to the end of this section.


Note that due to Proposition \ref{prop:not_divisible} and our assumption that $k$ is odd or $4|k$ we know that $p\nmid k$ and therefore $f_p$ is well-defined.

If $p^{\nu_p(a)} < k$ we obviously have $f_p=\ord_{k}(p)>\nu_p(a)$. Therefore Proposition \ref{prop:res_class_deg} yields that
$0=n\leq \left\lfloor \frac{\nu_p(a)}{f_p} \right\rfloor f_p$, \textit{i.e.} $p\not \in S$ if we have 
$p^{\nu_p(a)} < k$. Thus the second statement of Theorem~\ref{th:mult_ind} is proved, \textit{i.e.} $k\leq \min_{p\in S}\{p^\alpha\: :\: p^\alpha \| a\}$.
The third statement of Theorem~\ref{th:mult_ind} is now an immediate consequence of
Proposition~\ref{prop:res_class_deg}. Note that $\Phi_k(m)|\Phi_k(m+a)$ or $\Phi_k(m+a)|\Phi_k(m)$ if $-m+\zeta_k$ and $-(m+a)+\zeta_k$ are mulitplicatively 
dependent. Thus we may assume that $\Phi_k(x)=\gcd(\Phi_k(m),\Phi_k(m+a))$, with $x=m$ or $x=m+a$.

\begin{proof}[Proof of Proposition \ref{prop:res_class_deg}]
  Let us consider the ideal $I=(-m+\zeta_k,-m-a+\zeta_k)$ in the
  maximal order $\Z[\zeta_k]$. We note that
  $N(I)=\gcd(\Phi_k(m),\Phi_k(m+a))$, where $N(I)$ denotes the norm
of the ideal $I$. Moreover we know that
  $I=(a,-m+\zeta_k)$ and therefore the $\Z$-module $I'=\langle a,
  -m+\zeta_k, (-m+\zeta_k)^2, \dots, (-m+\zeta_k)^{\phi(k)-1}\rangle$ is
  a $\Z$-submodule of $I$. Of course $I'$ is also a sublattice in
  $\Z[\zeta_k]$ and has index
$$[\Z[\zeta_k]:I']=\det\left(\begin{array}{cccc} a & -m & \dots & *\\ 0 & 1 & \dots & * \\ \vdots & & \ddots & \vdots \\ 0& 0 & \dots & 1\end{array}\right)=a.$$
Since $\Z[\zeta_k]\supset I \supset I'$ we deduce from Lagrange's
theorem that $N(I)=[\Z[\zeta_k]:I]|a$. Let us fix a rational prime $p\in S$ and let
$\mathfrak p|(p)$ denote a prime ideal lying above the rational prime
$p$. Then we have
$$ \sum_{\mathfrak p|(p)} \nu_{\mathfrak p}(I)f_{\mathfrak p}=\nu_p(N(I))=n\leq \nu_p(a)$$
where $f_{\mathfrak p}$ is the residue class degree of $\mathfrak p$
in $\Q(\zeta_k)/\Q$. Let us note that it suffices to show that
$f_{\mathfrak p}=f_p$ for all prime ideals $\mathfrak p|(p)$. Since
$\Q(\zeta_k)/\Q$ is a Galois extension all $f_{\mathfrak p}$ are
equal. Therefore it is enough to show that the residue class degree $f_\p$ of a
particular $\mathfrak p$ in $\Q(\zeta_k)/\Q$ is exactly $\ord_k(p)=f_p$. However, this is well-known
from algebraic number theory and we summarize the crucial facts in the following lemma:

\begin{lemma}\label{lem:alg_nt}
 Let $p$ be a rational prime and let $k>2$ be an integer such that $p\nmid k$. Then
 \begin{itemize}
  \item $(p)=\p^{\phi(p^\ell)}$ is totally ramified in $\Z[\zeta_{p^\ell}]$ and $\p$ has therefore residue class degree $f_{\p}=1$.
  \item $(p)=\p_1 \dots \p_g$ is the prime idal factorisation in $\Z[\zeta_k]$, where $g=\phi(k)/f_\p$ and $f_\p=\ord_{k}(p)$ is the residue class degree of 
any of the prime ideals $\p_i$ for $i=1,\dots,g$.
 \end{itemize}
\end{lemma}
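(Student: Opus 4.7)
The plan is to invoke Dedekind's theorem, which reduces the prime factorisation of $(p)$ in $\Z[\zeta_k]$ to the factorisation of the minimal polynomial $\Phi_k(x)$ modulo $p$; this is permissible because $\Z[\zeta_k]$ is the full ring of integers of $\Q(\zeta_k)$. The two bullets are then independent computations.

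For the first bullet I would use the identity $\Phi_{p^\ell}(x)=(x^{p^\ell}-1)/(x^{p^{\ell-1}}-1)$ together with the Frobenius identity $x^{p^\ell}-1\equiv(x-1)^{p^\ell}\pmod p$ to conclude $\Phi_{p^\ell}(x)\equiv(x-1)^{\phi(p^\ell)}\pmod p$. Dedekind's theorem then yields $(p)=\p^{\phi(p^\ell)}$ with $\p=(p,\zeta_{p^\ell}-1)$; the residue field is $\mathbb{F}_p$, so $f_\p=1$.

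For the second bullet, under the hypothesis $p\nmid k$ the polynomial $x^k-1$ is separable over $\mathbb{F}_p$, so $\Phi_k(x)\bmod p$ is a product of pairwise distinct irreducible factors and $p$ is unramified in $\Z[\zeta_k]$. Because $\Q(\zeta_k)/\Q$ is Galois, the Galois group acts transitively on the primes above $p$, so all irreducible factors share a common degree $f$, equal to the residue class degree of every $\p_i$. Finally I would identify $f$ with $\ord_k(p)$: the residue field $\mathbb{F}_{p^f}$ must be the smallest extension of $\mathbb{F}_p$ containing a primitive $k$-th root of unity, and $\mathbb{F}_{p^n}^\times$ contains such an element exactly when $k\mid p^n-1$, i.e.\ when $p^n\equiv 1\pmod k$; the smallest such $n$ is by definition $\ord_k(p)=f_p$, and then $g=\phi(k)/f_p$ by the $efg$-formula. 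There is no genuine obstacle here: the lemma collects two classical facts, and it would be equally reasonable to omit the proof and simply cite, e.g., \cite[Ch.~IV]{Lang:ANT}.
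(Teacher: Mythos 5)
Your argument is correct, but note that the paper does not prove this lemma at all: it simply cites \cite[Chapter~11.3, Lemma~N]{Ribenboim:ANT} for the first bullet and \cite[Theorem~2.13]{Washington:CF} for the second, exactly the option you mention at the end. Your Dedekind-based sketch is the standard textbook proof of those cited facts and is sound: the congruence $\Phi_{p^\ell}(x)\equiv(x-1)^{\phi(p^\ell)}\pmod p$ handles total ramification, and separability of $x^k-1$ over $\mathbb{F}_p$ (valid since $p\nmid k$) plus transitivity of the Galois action gives the unramified splitting with a common residue degree $f$. The only point you gloss over slightly is why the residue field is the \emph{smallest} extension of $\mathbb{F}_p$ containing a primitive $k$-th root of unity: one should note that the residue field equals $\mathbb{F}_p(\bar\zeta_k)$ (via Dedekind, since $\Z[\zeta_k]$ is the maximal order) and that $\bar\zeta_k$ still has exact multiplicative order $k$ because $x^k-1$ has distinct roots modulo $p$; with that observed, $f=\ord_k(p)$ and $g=\phi(k)/f$ follow as you say.
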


 The lemma is proved in most of the algebraic number theory books available, e.g. \cite[Chapter~11.3, Lemma N]{Ribenboim:ANT} for the first statement and
 \cite[Theorem 2.13]{Washington:CF} for the second.\end{proof}

\section{Proof of Corollary \ref{Cor:small_a}}\label{Sec:Algorithm}

Given an integer $a$, an application of Theorem \ref{th:mult_ind}
enables us (at least in principal) to find all pairs $(m,k)$ such that
$-m+\zeta_k$ and $-m-a+\zeta_k$ are multiplicatively
independent. Finding all pairs $(m,k)$ essentially depends on
computing a list of possible $k$'s. Obviously an integer $k$ satisfying the conditions of the second point of Theorem \ref{th:mult_ind} satisfies
$$k\left|\prod_{q^\beta < M\atop q \in \P\setminus S}q^\beta,\right.$$
where $M=\min_{p\in S} \{p^\alpha\: :\:p^\alpha \| a\}$. But, if $a$
contains only large prime powers the list of possible $k$'s might get
rather long. To overcome this problem we fix a set $\emptyset
\subsetneq S\subset S_a$ and compute for this specific subset $S$ a
list of possible candidates for $k$. If $a$ is not too large the
number of subsets $S$ usually stays small. In case that $a\leq 10^6$
we know that $|S_a|\leq 7$. Therefore we may assume that $S$ is
fixed. In view of Proposition \ref{prop:res_class_deg} we know that
for a prime $q<M$ we have $\nu_q(k)\geq \beta$ only if
$\ord_{q^\beta}(p)\leq\ord_k(p)\leq \nu_p(a)$ for all $p\in S$. In
particular the list of possible $k$ is contained in the list of
divisors of \[K=\prod_{q<M \atop q\in \P\setminus S} q^{\beta_q}\]
where $\beta_q$ is the maximal exponent $\beta$ such that
$\ord_{q^\beta}(p)\leq \nu_p(a)$ for all $p\in S$. The quantity $K$
can be computed rather quickly and we may consider only those $k$ such
that $k|K$ and $\ord_k(p)\leq \nu_p(a)$ for all $p\in S$.  This
approach to compute a list of candidates for $k$ is rather fast and
reduces the number of possible $k$ considerably.

In case of $a$ being cube-free there is even a stronger criteria:

\begin{lemma}\label{lem:cubefree}
    Assume that $\nu_p(a)\leq 2$ for all $p\in S$. Then $k|G$, where
    $G$ is the $\gcd$ of all $p^{\nu_p(a)}-1$ with $p\in S$.
\end{lemma}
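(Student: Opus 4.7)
The plan is to combine the constraint $f_p=\ord_k(p)\leq \nu_p(a)$ from Theorem~\ref{th:mult_ind} with the cube-free hypothesis. Under that hypothesis, the bound on $f_p$ automatically upgrades to the divisibility $f_p\mid \nu_p(a)$, which is exactly what is needed to translate ``$p$ has small order mod $k$'' into ``$k$ divides $p^{\nu_p(a)}-1$''.

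First, I fix $p\in S$. By the second bullet of Theorem~\ref{th:mult_ind} we have $p\nmid k$, so $f_p=\ord_k(p)$ is well-defined, and the same theorem gives $f_p\leq \nu_p(a)$. The hypothesis $\nu_p(a)\leq 2$ leaves only two possibilities: either $\nu_p(a)=1$, forcing $f_p=1$, or $\nu_p(a)=2$, forcing $f_p\in\{1,2\}$. In every one of these cases $f_p\mid \nu_p(a)$, because $1$ divides everything and the only nontrivial case $f_p=2$ matches $\nu_p(a)=2$ exactly.

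Next, I use the defining property of the multiplicative order: $f_p\mid \nu_p(a)$ is equivalent to $p^{\nu_p(a)}\equiv 1\pmod{k}$, i.e. $k\mid p^{\nu_p(a)}-1$. Since this holds for every $p\in S$, we conclude
\[
k\;\Big|\; \gcd_{p\in S}\bigl(p^{\nu_p(a)}-1\bigr)=G,
\]
as required.

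There is no real obstacle: the lemma is a clean specialisation of Theorem~\ref{th:mult_ind} tailored to the range $f_p\in\{1,2\}$, where the mere inequality $f_p\leq \nu_p(a)$ already upgrades to divisibility. The only minor point to check is that $f_p$ is defined in the first place, which is precisely where we invoke $p\nmid k$ from Proposition~\ref{prop:not_divisible}.
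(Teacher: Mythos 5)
Your proof is correct and uses essentially the same idea as the paper: the bound $\ord_k(p)\leq\nu_p(a)\leq 2$ forces $\ord_k(p)\mid\nu_p(a)$, hence $k\mid p^{\nu_p(a)}-1$. The only cosmetic difference is that the paper runs the argument prime power by prime power over the divisors $q^\beta\|k$ (to fit the surrounding algorithmic discussion), whereas you argue directly with $\ord_k(p)$, which is slightly cleaner.
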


\begin{proof}
By Proposition \ref{prop:res_class_deg} and the discussion above we have $\ord_{q^\beta}(p)\leq \ord_k(p) \leq 2$
that is $q^\beta|(p-1)$ or $q^\beta|(p+1)$. More precisely for all $p\in S$ with $\nu_p(a)=1$ we have $q^\beta|(p-1)$ and for all $p\in S$ with $\nu_p(a)=2$ 
we have $q^\beta|(p-1)(p+1)=p^2-1$. So in any case each $q^\beta|k$ also divides $p^{\nu_p(a)}-1$ with $p\in S$, \textit{i.e.} $q^\beta|G$.
\end{proof}

Let us summarize this by an example. We choose $a=3^3*19*127$ and $S=\{3,19\}$ then we obtain $M=19$ and $K=2$, \textit{i.e.} $k|2$ which we excluded. In  
case of $S=\{3\}$ we obtain $M=27$ and $K=104$ and we have to deal with $5$ candidates for $k|K$, namely $4,8,13,52$ and $104$. Note that $26|104$ but $26\equiv 
2 \mod 4$, which we may exclude due to Remark \ref{rem:k_not_2}. But,
only $k=4,8,13$ satisfy $\ord_k(3)\leq 3=\nu_3(a)$. In case of
$S=\{19,127\}$ we may apply Lemma \ref{lem:cubefree} to get 
$k|\gcd(18,126)=18$, hence $k=3$ or $k=9$.

Therefore we may assume that both $S$ and $k$ are fixed. By the third
statement of Theorem~\ref{th:mult_ind} we have only finitely many
possible values $Y$ for $\Phi_k(x)=Y$ with $x=m$ or $x=m+a$. Solving
the polynomial equation $\Phi_k(x)=Y$ for given $k$ and $Y$ we find
all possible values for $m$. However it is far from trivial to find
all integral solutions to the equation $\Phi_k(x)=Y$ in case that $k$
is large. Therefore let us describe our approach to this problem.

As already noted at several other places  $q^2|k$ implies that $\Phi_k(x)=\Phi_{k/q}(x^q)$ and we also know that $\Phi_{2k}(x)=\Phi_k(x)$ for odd integers $k$. 
Therefore we may consider the polynomial equation $\Phi_{k'}(x)=Y$ where $k'$ is the odd-square-free part of $k$. Since 
$\Phi_{k'}(0)-Y=1-Y$ the possible integral solutions to $\Phi_{k'}(x)=Y$ are $x=\pm d$ with $d|Y-1$. And it seems easy to check all possible $d$'s whether they
yield a solution to the equation $\Phi_k(x)=Y$ or not. And indeed it is easy as long as the degree of $\Phi_{k'}$ stays considerable small, say 
$\deg(\Phi_{k'})=\phi(k')\leq 20000$. In case that 
$\phi(k')\geq 20000$ computing $\Phi_{k'}(d)$ takes some time and by numerical evidence considering the equation $\Phi_{k'}(x)=Y$ modulo several small primes 
to exclude a possible solution seems to be more suitable. In particular in case that $\phi(k')\geq 20000$ we check  $\Phi_{k'}(d)\equiv Y \mod P$ for all 
possible solutions $d$ and all primes $P<10^3$ in order to find a possible solution. In the unlikely case that a possible solution could not be excluded by 
this approach we verify it directly. Let us note that we chose the treshold $20000$ since it seems that for $k'$ with $\phi(k')\geq 20000$ the modulo $P$ 
approach is faster in our implementation.

For several instances it happened that we found a solution to $\Phi_k(x)=Y$ and therefore found a possible pair $(k,m)$ such that $-m+\zeta_k$ and 
$-(m\pm a)+\zeta_k$ are multiplicatively dependent. But if $-m+\zeta_k$ and $-(m\pm a)+\zeta_k$ are multiplicatively dependent, then so are $\Phi_k(m)$ and 
$\Phi_k(m\pm a)$. Thus there exist integers $r,s$ such that $\Phi_k(m)^r=\Phi_k(m\pm a)^s$. But for a given pair of rational integers it is easy to check 
whether they are multiplicatively independent or not. 

An implementation of all these ideas leads to a rather efficient algorithm (see the Appendix for a concrete implementation). Let us note that the running time 
varies dramatically with $a$. For instance the longest runtime for a single instance was $670$ seconds obtained by $a=942479$ while the running time for the 
instance $a=950462$ took less than $0.01$ seconds. The computation was performed on a standard computer by the computer algebra system SAGE \cite{sage} 
and was split up onto several kernels. The cumulative CPU-computing time was about $36$ days and $13$ hours.

\section*{Acknowledgment}

The authors want to thank Julien Bernat (Universit\'e de Lorraine) for
posing the question of fixing $k$ and varying $m$ and $n$, which
led to Theorem \ref{th:mult_ind}.


\begin{thebibliography}{10}

\bibitem{akiyama_petho2002:canonical_number_systems}
S.~Akiyama and A.~Peth{\H{o}}.
\newblock On canonical number systems.
\newblock {\em Theoret. Comput. Sci.}, 270(1-2):921--933, 2002.

\bibitem{Bombieri:1999}
E.~Bombieri, D.~Masser, and U.~Zannier.
\newblock Intersecting a curve with algebraic subgroups of multiplicative
  groups.
\newblock {\em Internat. Math. Res. Notices}, (20):1119--1140, 1999.

\bibitem{Cohen:NTI}
H.~Cohen.
\newblock {\em Number theory. {V}ol. {I}. {T}ools and {D}iophantine equations},
  volume 239 of {\em Graduate Texts in Mathematics}.
\newblock Springer, New York, 2007.

\bibitem{gilbert81:_radix}
W.~J. Gilbert.
\newblock Radix representations of quadratic fields.
\newblock {\em J. Math. Anal. Appl.}, 83(1):264--274, 1981.

\bibitem{katai_kovacs1980:kanonische_zahlensysteme_in}
I.~K{\'a}tai and B.~Kov{\'a}cs.
\newblock Kanonische {Z}ahlensysteme in der {T}heorie der quadratischen
  algebraischen {Z}ahlen.
\newblock {\em Acta Sci. Math. (Szeged)}, 42(1-2):99--107, 1980.

\bibitem{katai_kovacs1981:canonical_number_systems}
I.~K{\'a}tai and B.~Kov{\'a}cs.
\newblock Canonical number systems in imaginary quadratic fields.
\newblock {\em Acta Math. Acad. Sci. Hungar.}, 37(1-3):159--164, 1981.

\bibitem{katai_szabo1975:canonical_number_systems}
I.~K{\'a}tai and J.~Szab{\'o}.
\newblock Canonical number systems for complex integers.
\newblock {\em Acta Sci. Math. (Szeged)}, 37(3-4):255--260, 1975.

\bibitem{knuth1981:art_computer_programming}
D.~E. Knuth.
\newblock {\em The art of computer programming. {V}ol. 2}.
\newblock Addison-Wesley Publishing Co., Reading, Mass., second edition, 1981.
\newblock Seminumerical algorithms, Addison-Wesley Series in Computer Science
  and Information Processing.

\bibitem{kovacs1981:canonical_number_systems}
B.~Kov{\'a}cs.
\newblock Canonical number systems in algebraic number fields.
\newblock {\em Acta Math. Acad. Sci. Hungar.}, 37(4):405--407, 1981.

\bibitem{kovacs_petho1991:number_systems_in}
B.~Kov{\'a}cs and A.~Peth{\H{o}}.
\newblock Number systems in integral domains, especially in orders of algebraic
  number fields.
\newblock {\em Acta Sci. Math. (Szeged)}, 55(3-4):287--299, 1991.

\bibitem{Lang:ANT}
S.~Lang.
\newblock {\em Algebraic number theory}, volume 110 of {\em Graduate Texts in
  Mathematics}.
\newblock Springer-Verlag, New York, second edition, 1994.

\bibitem{Madritsch:2014}
M.~Madritsch and V.~Ziegler.
\newblock An infinite family of multiplicatively independent bases of number
  systems in cyclotomic number fields.
\newblock submitted, 10 pages.

\bibitem{pethoe1991:polynomial_transformation_and}
A.~Peth{\H o}.
\newblock On a polynomial transformation and its application to the
  construction of a public key cryptosystem.
\newblock In {\em Computational number theory ({D}ebrecen, 1989)}, pages
  31--43. de Gruyter, Berlin, 1991.

\bibitem{Ribenboim:ANT}
P.~Ribenboim.
\newblock {\em Classical theory of algebraic numbers}.
\newblock Universitext. Springer-Verlag, New York, 2001.

\bibitem{sage}
W.~Stein et~al.
\newblock {\em {S}age {M}athematics {S}oftware ({V}ersion 5.8)}.
\newblock The Sage Development Team, 2013.
\newblock {\tt http://www.sagemath.org}.

\bibitem{Washington:CF}
L.~C. Washington.
\newblock {\em Introduction to cyclotomic fields}, volume~83 of {\em Graduate
  Texts in Mathematics}.
\newblock Springer-Verlag, New York, second edition, 1997.

\bibitem{Zannier:LNDA}
U.~Zannier.
\newblock {\em Lecture notes on {D}iophantine analysis}, volume~8 of {\em
  Appunti. Scuola Normale Superiore di Pisa (Nuova Serie) [Lecture Notes.
  Scuola Normale Superiore di Pisa (New Series)]}.
\newblock Edizioni della Normale, Pisa, 2009.
\newblock With an appendix by Francesco Amoroso.

\end{thebibliography}




\end{document}